\documentclass{article}

\usepackage{color}
\usepackage{amsthm}
\usepackage{tikz}
\usepackage{hyperref}

\usetikzlibrary{decorations.pathreplacing}

\usepackage{amsmath}
\usepackage{amsfonts}
\usepackage{amstext}
\usepackage[latin1]{inputenc}
\usepackage{amscd}
\usepackage{latexsym}
\usepackage{bm}
\usepackage{amssymb}
\usepackage[all]{xy}
\usepackage{euscript}
\usepackage{a4wide}
\usepackage{amsmath,amssymb,graphicx}
\usepackage{amssymb}
\usepackage{amsmath}
\usepackage{mathrsfs,mathtools}


\newtheorem{theorem}{Theorem}
\newtheorem{proposition}{Proposition}

\newtheorem{lemma}{Lemma}
\newtheorem{corollary}{Corollary}
\newtheorem{example}{Example}
\newtheorem{definition}{Definition}
\newtheorem{remark}{Remark}


\def\di{\displaystyle}
\def\N{\mathbb{N}}
\def\R{\mathbb{R}}
\def\T{\mathbb{T}}
	\def\TK{\mathbb{T}^\kappa}
	\def\Tk{\mathbb{T}_\kappa}
	\def\TKk{\mathbb{T}^\kappa_\kappa}

\def\L{\mathcal{L}}

\def\DD{\Delta}

\def\CC{\mathscr{C}}
\def\CCC{\mathrm{C}}

\def\RS{\mathrm{RS}}
\def\LS{\mathrm{LS}}
\def\RD{\mathrm{RD}}
\def\LD{\mathrm{LD}}

\newcommand{\fonction}[5]{\begin{array}[t]{lrcl}#1 :&#2 &\longrightarrow &#3\\&#4& \longmapsto &#5 \end{array}}

\newcommand{\fonctionsansdef}[3]{#1 : #2 \longrightarrow #3}



\title{Nonshifted calculus of variations on time scales with $\nabla$-differentiable $\sigma$}

\author{Lo\"ic Bourdin\footnote{Laboratoire de Math\'ematiques et de leurs Applications - Pau (LMAP). UMR CNRS 5142. Universit\'e de Pau et des Pays de l'Adour. \texttt{bourdin.l@univ-pau.fr}} }

\date{}

\begin{document}

\maketitle

\begin{abstract}
In calculus of variations on general time scales, an Euler-Lagrange equation of integral form is usually derived in order to characterize the critical points of nonshifted Lagrangian functionals, see \textit{e.g.} [R.A.C. Ferreira and co-authors, \textit{Optimality conditions for the calculus of variations with higher-order delta derivatives}, Appl. Math. Lett., 2011]. In this paper, we prove that the $\nabla$-differentiability of the forward jump operator $\sigma$ is a sharp assumption on the time scale in order to $\nabla$-differentiate this integral Euler-Lagrange equation. This procedure leads to an Euler-Lagrange equation of differential form. Furthermore, from this differential form, we prove a Noether-type theorem providing an explicit constant of motion for Euler-Lagrange equations admitting a symmetry.
\end{abstract}

\noindent\textbf{Keywords:} Time scale; calculus of variations; Euler-Lagrange equations; Noether's theorem.

\noindent\textbf{AMS Classification:} 34N05; 39A12; 39A13; 39A10.

\tableofcontents

\section{Introduction}\label{section0}
The time scale theory was introduced by S. Hilger in his PhD thesis \cite{hilg} in 1988 in order to unify discrete and continuous analyses. The general idea is to extend classical theories on an arbitrary non-empty closed subset $\T$ of $\R$. Such a subset $\T$ is called a \textit{time scale}. In this paper, it is assumed to be bounded with $a=\min \T$ and $b = \max \T$. Hence, the time scale theory establishes the validity of some results both in the continuous case $\T =[a,b]$ and in the purely discrete case $\T =\{ a = t_0 <  \ldots < t_N = b \}$. Moreover, it also treats more general models involving both continuous and discrete time elements, see \textit{e.g.} \cite{gama,may} for dynamical populations whose generations do not overlap. Another example of application is to consider $\T = \{ a \} \cup \{ a + \lambda^\N \}$ with $0 < \lambda < 1$ allowing the time scale theory to cover the quantum calculus \cite{kac}. 

Since S. Hilger defined the $\DD$- and $\nabla$-derivatives on time scales, many authors have extended to time scales various results from the continuous or discrete standard calculus theory. We refer to the surveys \cite{agar2,agar3,bohn,bohn3} of M. Bohner and co-authors. In the continuous case $\T = [a,b]$, the operators $\DD$ and $\nabla$ coincide with the usual derivative operator $d/dt$, \textit{i.e.} $u^\DD = u^\nabla = \dot{u}$ where $\dot{u}$ is the classical derivative of a function $u$. In the discrete case $\T =\{ a = t_0 < \ldots < t_N = b \}$, $\DD$ is the usual forward Euler approximation of $d/dt$, \textit{i.e.} $\DD u (t_k) = (u(t_{k+1}) - u(t_k))/(t_{k+1}-t_k)$ and similarly, $\nabla$ is the usual backward Euler approximation of $d/dt$, \textit{i.e.} $\nabla u (t_k) = (u(t_{k}) - u(t_{k-1}))/(t_{k}-t_{k-1})$.

\paragraph{Context in shifted calculus of variations.}
The pioneering work on calculus of variations on general time scales is due to M. Bohner in \cite{bohn2}. In particular, he obtains a necessary condition for local optimizers of Lagrangian functionals of type
\begin{equation}\label{eqshiftlagintro}
\L (u ) = \di \int_a^b L(u^\sigma (\tau),u^\DD (\tau), \tau) \; \DD \tau,
\end{equation}
where $L : \R^n \times \R^n \times \T^\kappa \longrightarrow \R, \; (x,v,t) \longmapsto L(x,v,t)$ is a continuous Lagrangian of class $\CC^1$ in its two first variables, $u$ is a $\CCC^{1,\DD}_{\mathrm{rd}}(\T)$-function, $u^\sigma = u \circ \sigma$, $u^\DD$ is the $\DD$-differential of $u$ and $\int \DD \tau$ denotes the Cauchy $\DD$-integral defined in \cite[p.26]{bohn}. We refer to Section~\ref{section11} for the precise definitions. \\

\noindent \textit{Notation: in the whole paper, the function defined by $t \longmapsto \partial L / \partial v (u^\sigma(t),u^\DD(t),t)$, where $u \in \CCC^{1,\DD}_{\mathrm{rd}}(\T)$, is denoted by $\partial L / \partial v(u^\sigma,u^\DD,\cdot)$.} \\

\noindent Precisely, M. Bohner characterizes the critical points of $\L$ as the solutions of the following $\DD \circ \DD$-differential Euler-Lagrange equation, see \cite[Theorem~4.2]{bohn2}:
\begin{equation}\label{eqdiffeldeltadelta}\tag{EL${}^{\DD \circ \DD}_\mathrm{diff}$}
\left[ \dfrac{\partial L}{\partial v} (u^\sigma,u^\DD,\cdot) \right]^\DD (t) = \dfrac{\partial L}{\partial x} (u^\sigma (t),u^\DD (t),t).
\end{equation}
Here, the notation $\DD \circ \DD$ refers to the composition of $\DD$ with itself in the left-hand term of \eqref{eqdiffeldeltadelta}. As it is mentioned in \cite{bohn2}, the result of M. Bohner recovers the usual continuous case $\T = [a,b]$ (where $\sigma$ is the identity) where the critical points of Lagrangian functionals of type
\begin{equation}\label{eqcontintrolag}
\L (u ) = \di \int_a^b L(u (\tau),\dot{u} (\tau), \tau) \; d\tau
\end{equation}
are characterized by the solutions of the well known continuous Euler-Lagrange equation (see \textit{e.g.} \cite[p.12]{arno}) given by
\begin{equation}\label{eqcontintroel}
\dfrac{d}{dt} \left[ \dfrac{\partial L}{\partial v} (u,\dot{u},\cdot) \right] (t) = \dfrac{\partial L}{\partial x} (u(t),\dot{u}(t),t).
\end{equation}
Moreover, the work of M. Bohner in \cite{bohn2} also recovers the following discrete case $\T =\{ a = t_0 <  \ldots < t_N = b \}$  where the critical points of discrete Lagrangian functionals of type
\begin{equation}\label{eqlagshiftintro}
\L (u ) = \di \sum_{k=0}^{N-1} (t_{k+1}-t_k) L(u(t_{k+1}),\DD u(t_k),t_k)
\end{equation}
are characterized by the solutions of the well known discrete Euler-Lagrange equation (see \textit{e.g.} \cite{pete}) given by
\begin{equation}
\DD \left[ \dfrac{\partial L}{\partial v} (u^\sigma,\DD u,\cdot) \right] (t_k) = \dfrac{\partial L}{\partial x} (u(t_{k+1}),\DD u (t_k),t_k).
\end{equation}
In what follows, we will speak about $\L$ (defined in \eqref{eqshiftlagintro}) as a \textit{shifted} Lagrangian functional in reference to the presence of $u^\sigma$ (instead of $u$) in its definition. Note that this characteristic has no consequence on the continuous case but let us mention the presence of $u(t_{k+1})$ (instead of $u(t_k)$) in the discrete case. We will see that this difference is important at the discrete level and \textit{a fortiori} at the time scale one too. In particular, this shifted framework does not cover the variational integrator studied in \cite{lubi,mars}. We refer to the next paragraph for more details.

Since the publication of \cite{bohn2}, the \textit{shifted} calculus of variations is widely investigated in several directions: with double integral \cite{bohn4}, with higher-order $\DD$-derivatives \cite{torr9}, with nonfixed boundary conditions and transversality conditions \cite{hils}, with double integral mixing $\DD$- and $\nabla$-derivatives \cite{torr7}, with higher-order $\nabla$-derivatives \cite{torr11}, etc. We also refer to \cite{hils2,hils5} for \textit{shifted} optimal control problems. Let us mention that \textit{shifted} variational problems are particularly suitable (in comparison with the \textit{nonshifted} ones) because of the emergence of a \textit{shift} in the integration by parts formula on time scales (see \cite[Theorem~1.77 p.28]{bohn}) given by
\begin{equation}
\di \int_a^b u(\tau) \cdot v^\DD(\tau) \; \DD \tau = u(b) \cdot v(b) - u(a) \cdot v(a) - \di \int_a^b u^\DD(\tau) \cdot v^\sigma(\tau) \; \DD \tau.
\end{equation}

\paragraph{Context in nonshifted calculus of variations.}
As mentioned in the previous paragraph, the \textit{shifted} calculus of variations on general time scales developed in \cite{bohn2} does not cover an important discrete calculus of variations. Precisely, recall that the critical points of (nonshifted) discrete Lagrangian functionals of type
\begin{equation}
\L (u ) = \di \sum_{k=0}^{N-1} (t_{k+1}-t_k) L(u(t_k),\DD u(t_k),t_k)
\end{equation}
are characterized by the solutions of the well known discrete Euler-Lagrange equation (see \textit{e.g.} \cite{lubi}) given by
\begin{equation}\label{eqnonshiftdiscrintro}
\nabla \left[ \dfrac{\partial L}{\partial v} (u,\DD u,\cdot) \right] (t_k) = \dfrac{t_{k+1}-t_k}{t_k - t_{k-1}} \; \dfrac{\partial L}{\partial x} (u(t_k),\DD u (t_k),t_k).
\end{equation}
Recall that the above discrete Euler-Lagrange equation~\eqref{eqnonshiftdiscrintro} corresponds to the variational integrator obtained and well studied in \cite{lubi,mars}. In particular, it is an efficient numerical scheme for the continuous Euler-Lagrange equation~\eqref{eqcontintroel} preserving its variational structure and relative properties at the discrete level. In this (nonshifted) discrete case, note the emergence of the composition between the operators $\nabla$ and $\DD$. In numerical analysis, it is well known that such a composition allows to get a larger order of convergence.

Up to our knowledge, only few studies treat on the \textit{nonshifted} calculus of variations on general time scales, see \cite{cres9,torr14,hils6}. In these papers, the critical points of (nonshifted) Lagrangian functionals of type
\begin{equation}\label{eqnonshiftlagintro}
\L (u ) = \di \int_a^b L(u (\tau),u^\DD (\tau), \tau) \; \DD \tau
\end{equation}
are characterized by the solutions of the following $\DD$-integral Euler-Lagrange equation, see \textit{e.g.} \cite[Theorem~4]{hils6}:
\begin{equation}\tag{EL${}^{\DD}_{\text{int}}$}\label{eqintelintro}
\dfrac{\partial L}{\partial v} (u(t),u^\DD(t),t) = \di \int_a^{\sigma (t)}  \dfrac{\partial L}{\partial x} (u(\tau),u^\DD(\tau),\tau) \; \DD \tau + c.
\end{equation}
The objective of this paper is to $\nabla$-differentiate \eqref{eqintelintro} in order to get a $\nabla \circ \DD$-differential Euler-Lagrange equation of type
\begin{equation}\label{eqdiffelintro}
\left[ \dfrac{\partial L}{\partial v} (u,u^\DD,\cdot) \right]^\nabla (t) =  \omega (t) \dfrac{\partial L}{\partial x} (u(t),u^\DD(t),t)
\end{equation}
that encompasses both the classical continuous case~\eqref{eqcontintroel} and the (nonshifted) discrete case~\eqref{eqnonshiftdiscrintro}. Towards this goal, a difficulty emerges due to the presence of $\sigma$ in the upper bound of the $\DD$-integral in \eqref{eqintelintro}. In fact, we will exhibit a counter-example (see Example~\ref{ex1}) showing that we cannot $\nabla$-differentiate \eqref{eqintelintro} on general time scales with a general Lagrangian $L$. It leads us to consider a subclass of time scales. Precisely, we prove that the $\nabla$-differentiability of $\sigma$ is a sharp assumption on the time scale in order to $\nabla$-differentiate~\eqref{eqintelintro} and to obtain a $\nabla \circ \DD$-differential Euler-Lagrange equation of type~\eqref{eqdiffelintro}. Moreover, in such a case, an interesting phenomena is the direct emergence of this assumption in \eqref{eqdiffelintro} since we prove that $\omega=\sigma^\nabla$. 

In this paper, we study the consequences of the $\nabla$-differentiability of $\sigma$ on the structure of $\T$. In particular, we will see how this assumption allows us to apply $\nabla$ on \eqref{eqintelintro} in order to obtain \eqref{eqdiffelintro} with $\omega = \sigma^\nabla$. Note that the $\nabla$-differentiability of $\sigma$ is not a loss of generality since it is satisfied in the continuous case $\T =[a,b]$ (with $\sigma^\nabla =1$) and in the discrete case $\T =\{ a = t_0 < \ldots < t_N = b \}$ (with $\sigma^\nabla (t_k) = (t_{k+1}-t_k)/(t_k - t_{k-1})$). In particular, note that our main result recovers both the usual continuous case \eqref{eqcontintroel} and the nonshifted discrete case \eqref{eqnonshiftdiscrintro}.

\paragraph{Derivation of Noether-type results.}
In \textit{shifted} calculus of variations, we refer to the paper \cite{torr8} studying the existence of a constant of motion for $\DD \circ \DD$-differential Euler-Lagrange equations \eqref{eqdiffeldeltadelta}. We refer to \cite{torr12} for a similar study with $\nabla$-derivatives. The common strategy is to generalize the celebrated Noether's theorem \cite{noet2,noet} to time scales. Precisely, under invariance assumption on the Lagrangian $L$, authors prove that a conservation law can be obtained. 

In \textit{nonshifted} calculus of variations, the nondifferential form of \eqref{eqintelintro} is an obstruction in order to develop the same strategy. A direct application of our main result is then to provide a Noether-type theorem based on the differential form \eqref{eqdiffelintro}. This will be done in Section~\ref{section3}.

\begin{remark}
For sake of completeness of this introduction, we mention that an Euler-Lagrange equation of \textit{differential} form in the nonshifted case is obtained in \cite[Remark~4]{hils6}. Precisely, the author characterizes the critical points of $\L$ (defined in \eqref{eqnonshiftlagintro}) as the solutions of the following differential Euler-Lagrange equation:
\begin{equation}
\left[ \dfrac{\partial L}{\partial v}(u,u^\DD,\cdot) - \mu \dfrac{\partial L}{\partial x}(u,u^\DD,\cdot) \right]^\DD (t) = \dfrac{\partial L}{\partial x}(u(t),u^\DD(t),t).
\end{equation}
The advantage of this result is to be valid on every time scale. Nevertheless, this differential form does not directly coincide with the usual discrete Euler-Lagrange equation \eqref{eqnonshiftdiscrintro} and the obtaining of a Noether-type result from this differential form remains an open problem. These observations both give a particular interest to the $\nabla$-differentiation of~\eqref{eqintelintro} and to the $\nabla \circ \DD$-differential formulation~\eqref{eqdiffelintro}.
\end{remark}

\begin{remark}
Finally, it has to be noted that our whole study is made in terms of Lagrangian functionals involving $\DD$-integral and $\DD$-derivative. However, thanks to the duality principle introduced in \cite{capu}, all results can be analogously derived for Lagrangian functionals involving $\nabla$-integral and $\nabla$-derivative. This will be done in Section~\ref{section4}.
\end{remark}

\paragraph{Organization of the paper.}
We first give basic recalls on time scale calculus in Section~\ref{section11} and on nonshifted calculus of variations on general time scales in Section~\ref{section12}. Section~\ref{section13} is devoted to our main result (Theorem~\ref{thmmain}). A study on time scales with continuous $\sigma$ is provided in Section~\ref{section21} and with $\nabla$-differentiable $\sigma$ in Sections~\ref{section22} and \ref{section23}. The results obtained in Section~\ref{section2} are instrumental in order to prove Theorem~\ref{thmmain}. We prove a Noether-type theorem (Theorem~\ref{thmnoether}) in Section~\ref{section3}. In Section~\ref{section4}, we conclude this paper with the analogous results for nonshifted calculus of variations defined in terms of $\nabla$-integral and $\nabla$-derivative.

\section{Nonshifted calculus of variations on time scales with $\nabla$-differentiable $\sigma$}\label{section1}
In this paper, $\N$ denotes the set of nonnegative integers, $\N^*$ denotes the set of positive integers and $\R^+$ denotes the set of nonnegative reals. We denote by $\T$ a bounded time scale with $a = \min (\T)$, $b = \max (\T)$ and $\mathrm{card} (\T) \geq 3$. In Section~\ref{section11}, we give basic recalls on time scale calculus.

Section~\ref{section12} is devoted to recalls on nonshifted calculus of variations on general time scales developed in \cite{cres9,torr14,hils6}. In particular, the $\DD$-integral Euler-Lagrange equation~\eqref{eqintelintro} is given as a necessary condition for local optimizers of nonshifted Lagrangian functionals, see Proposition~\ref{propintel}.

In Section~\ref{section13}, under the assumption of $\nabla$-differentiability of $\sigma$, our main result provides a $\nabla \circ \DD$-differential Euler-Lagrange equation of type \eqref{eqdiffelintro} as a necessary condition for local optimizers of nonshifted Lagrangian functionals, see \eqref{eqdiffel} in Theorem~\ref{thmmain}. We also prove that this assumption is sharp, see Example~\ref{ex1}.

\subsection{Basic recalls on time scale calculus}\label{section11}
We refer to the surveys \cite{agar2,agar3,bohn,bohn3} for more details on time scale calculus. The backward and forward jump operators $\rho, \sigma : \T \longrightarrow \T$ are respectively defined by
\begin{equation}
\forall t \in \T, \; \rho (t) = \sup \{ s \in \T, \; s < t \} \; \text{and} \; \sigma (t) = \inf \{ s \in \T, \; s > t \},
\end{equation}
where we put $\sup \emptyset = a$ and $\inf \emptyset = b$. A point $t \in \T$ is said to be right-scattered (resp. left-scattered) if $\sigma (t) > t$ (resp. $\rho (t) < t$). A point $t \in \T$ is said to be right-dense (resp. left-dense) if $t \neq b$ and $\sigma (t) = t$ (resp. $t \neq a$ and $\rho (t) = t$). Let $\RS$ (resp. $\LS$, $\RD$ and $\LD$) denote the set of all right-scattered (resp. left-scattered, right-dense and left-dense) points of $\T$. Note that $a \notin \LD$ and $b \notin \RD$. The graininess (resp. backward graininess) function $\fonctionsansdef{\mu}{\T}{\R^+}$ (resp. $\fonctionsansdef{\nu}{\T}{\R^+}$) is defined by $\mu(t) = \sigma (t) -t$ (resp. $\nu(t) = t- \rho (t)$) for any $t \in \T$.

We set $\TK = \T \backslash ]\rho(b),b]$, $\Tk = \T \backslash [a,\sigma(a)[$ and $\TKk = \TK \cap \Tk$. Note that $\TKk \neq \emptyset$ since $\mathrm{card} (\T) \geq 3$. Let us recall the usual definitions of $\DD$- and $\nabla$-differentiability. A function $\fonctionsansdef{u}{\T}{\R^n}$, where $n \in \N^*$, is said to be $\DD$-differentiable at $t \in \TK$ (resp. $\nabla$-differentiable at $t \in \Tk$) if the following limit exists in $\R^n$:
\begin{equation}
\lim\limits_{\substack{s \to t \\ s \neq \sigma (t) }} \dfrac{u(\sigma(t))-u(s)}{\sigma(t) -s} \; \left( \text{resp.} \; \lim\limits_{\substack{s \to t \\ s \neq \rho (t) }} \dfrac{u(s)-u(\rho (t))}{s-\rho(t)} \right).
\end{equation}
In such a case, this limit is denoted by $u^\DD (t)$ (resp. $u^\nabla (t)$). Let us recall the following results on $\DD$-differentiability, see \cite[Theorem~1.16 p.5]{bohn} and \cite[Corollary~1.68 p.25]{bohn}. The analogous results for $\nabla$-differentiability are also valid.

\begin{proposition}\label{proprappeldelta}
Let $\fonctionsansdef{u}{\T}{\R^n}$ and $t \in \TK$. The following properties hold:
\begin{enumerate}
\item if $u$ is $\DD$-differentiable at $t$, then $u$ is continuous at $t$.
\item if $t \in \RS$ and if $u$ is continuous at $t$, then $u$ is $\DD$-differentiable at $t$ with
\begin{equation}
u^\DD (t) = \dfrac{u(\sigma(t))-u(t)}{\mu(t)}.
\end{equation}
\item if $\sigma (t)=t$, then $u$ is $\DD$-differentiable at $t$ if and only if the following limit exists in $\R^n$:
\begin{equation}
\lim\limits_{\substack{s \to t \\ s \neq t}} \dfrac{u(t)-u(s)}{t-s}.
\end{equation}
In such a case, this limit is equal to $u^\DD (t)$.
\end{enumerate}
\end{proposition}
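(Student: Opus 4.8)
The plan is to reduce everything to the $\epsilon$-$\delta$ reformulation of the defining limit and then to split each argument according to whether $t$ is right-dense ($\sigma(t)=t$, $\mu(t)=0$) or right-scattered ($\sigma(t)>t$, $\mu(t)>0$), since these two situations behave very differently. The single tool I would use is the restatement that $u$ is $\DD$-differentiable at $t$ with derivative $L \in \R^n$ if and only if, for every $\epsilon>0$, there is $\delta>0$ such that
\[
\bigl| u(\sigma(t)) - u(s) - L(\sigma(t)-s) \bigr| \leq \epsilon\,|\sigma(t)-s|
\]
for all $s\in\T$ with $|s-t|<\delta$ and $s\neq\sigma(t)$. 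Note that the exclusion bears only on $\sigma(t)$, so when $t$ is right-scattered the value $s=t$ is admissible, a fact I exploit twice below.

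I would first dispatch item 3, which is essentially a tautology: for $t\in\RD$ one has $\sigma(t)=t$, so the difference quotient appearing in the definition of $u^\DD(t)$ is literally $\tfrac{u(t)-u(s)}{t-s}$ and the side condition $s\neq\sigma(t)$ reads $s\neq t$; hence the defining limit and the limit in the statement coincide term by term, giving both the equivalence and the equality of the values.

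Next I would prove item 1 by the case split. If $t\in\RD$, item 3 gives $\tfrac{u(t)-u(s)}{t-s}\to u^\DD(t)$, so $u(t)-u(s)=\tfrac{u(t)-u(s)}{t-s}\,(t-s)\to 0$ and continuity is immediate. If $t\in\RS$, I set $s=t$ in the workhorse inequality to obtain $\bigl|u(\sigma(t))-u(t)-L\mu(t)\bigr|\leq\epsilon\mu(t)$; subtracting this from the inequality written at a general admissible $s$ and using $\sigma(t)-\mu(t)=t$ yields
\[
\bigl| u(s)-u(t) \bigr| \leq |L|\,|s-t| + \epsilon\,|\sigma(t)-s| + \epsilon\mu(t).
\]
Letting $s\to t$ (so $|s-t|\to 0$ and $|\sigma(t)-s|\to\mu(t)$) shows $\limsup_{s\to t}|u(s)-u(t)|\leq 2\epsilon\mu(t)$ for each fixed $\epsilon$, and letting $\epsilon\to 0$ forces $u(s)\to u(t)$.

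Finally, for item 2 I assume $t\in\RS$ and $u$ continuous at $t$. Since $t$ is right-scattered there is no point of $\T$ strictly between $t$ and $\sigma(t)$, so every $s\in\T$ sufficiently close to $t$ satisfies $s\leq t$, whence $\sigma(t)-s\to\mu(t)>0$ stays bounded away from $0$; continuity gives $u(s)\to u(t)$, and therefore $\tfrac{u(\sigma(t))-u(s)}{\sigma(t)-s}\to\tfrac{u(\sigma(t))-u(t)}{\mu(t)}$, which is exactly $\DD$-differentiability at $t$ with the announced value. I expect the only delicate point to be the right-scattered case of item 1: since differentiability controls only the combination $u(\sigma(t))-u(s)$, one must play the inequality at a general $s$ against the same inequality at $s=t$ to extract $u(s)-u(t)$, whereas the other items are direct. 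All arguments transpose verbatim to $\nabla$-differentiability by exchanging $\sigma,\mu,\RS,\RD$ for $\rho,\nu,\LS,\LD$ and the roles of left and right.
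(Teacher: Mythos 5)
Your proposal is correct: the $\epsilon$--$\delta$ reformulation is a faithful restatement of the defining limit, the algebra in the right-scattered case of item 1 (playing the inequality at a general $s$ against the one at $s=t$, using $\sigma(t)-\mu(t)=t$) is exactly the needed trick, and items 2 and 3 are handled soundly. Note that the paper itself gives no proof of this proposition --- it is recalled from Bohner--Peterson (Theorem~1.16 there) --- and your argument is essentially that standard proof, the only cosmetic difference being that you split item 1 into right-dense and right-scattered cases (using item 3 for the former), whereas the textbook runs the same subtraction argument uniformly, the $s=t$ term being trivially zero at a right-dense point.
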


\begin{proposition}\label{proprappeldelta2}
Let $\fonctionsansdef{u}{\T}{\R^n}$. Then, $u$ is $\DD$-differentiable on $\TK$ with $u^\DD = 0$ if and only if there exists $c \in \R^n$ such that $u(t) =c$ for every $t \in \T$.
\end{proposition}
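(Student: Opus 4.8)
The plan is to prove the two implications separately, the converse being trivial and the direct one carrying all the content. If $u(t)=c$ for every $t\in\T$, then for each $t\in\TK$ the relevant difference quotient (in either the right-scattered or right-dense formulation) is identically zero, so $u^\DD(t)=0$; this settles the ``if'' part. For the ``only if'' part, assume $u^\DD=0$ on $\TK$. I would first record that $u$ is continuous on all of $\T$: at every point of $\TK$ this is item~1 of Proposition~\ref{proprappeldelta}, and continuity at $b$ in the remaining case (where $b\notin\TK$, i.e.\ $b$ is left-scattered) is automatic because $b$ is then isolated from the left in $\T$. It then suffices to show $u(r)=u(a)$ for an arbitrary $r\in\T$ with $r>a$.

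The core idea is a supremum/continuation argument with a small amount of slack. Fixing $\epsilon>0$, I would introduce the set
\[
S=\{\,t\in[a,r]\cap\T \ :\ |u(t)-u(a)|\le \epsilon\,(t-a)\,\}.
\]
Since $u$ is continuous, $S$ is closed, and it is nonempty ($a\in S$) and bounded, so $s^\star:=\sup S$ lies in $S$. I would then prove $s^\star=r$ by contradiction: assuming $s^\star<r$ forces $s^\star<b$, hence $s^\star\in\TK$, and I split on the local structure of $\T$ at $s^\star$. If $s^\star\in\RS$, item~2 of Proposition~\ref{proprappeldelta} together with $u^\DD(s^\star)=0$ gives $u(\sigma(s^\star))=u(s^\star)$, so $\sigma(s^\star)$ still satisfies the defining inequality; as $s^\star<\sigma(s^\star)\le r$, this contradicts maximality. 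If $s^\star\in\RD$, item~3 of Proposition~\ref{proprappeldelta} yields $\lim_{s\to s^\star}(u(s)-u(s^\star))/(s-s^\star)=0$, so for $s\in\T$ just above $s^\star$ one has $|u(s)-u(s^\star)|\le\epsilon\,(s-s^\star)$; combining this with $s^\star\in S$ and the triangle inequality places such $s$ in $S$, and right-density guarantees such points $s>s^\star$ exist, again contradicting maximality. Hence $s^\star=r$, so $|u(r)-u(a)|\le\epsilon\,(r-a)$, and letting $\epsilon\to0$ gives $u(r)=u(a)$. Taking $c=u(a)$ finishes the proof.

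The main obstacle is the right-dense case. Vanishing of the $\DD$-derivative at a single point does not, on its own, prevent the function from moving immediately afterwards, so one cannot hope to propagate the exact equality $u(t)=u(a)$ pointwise. The slack term $\epsilon\,(t-a)$ built into $S$ is precisely the device that lets purely local derivative information be continued across right-dense points, and it is what allows the supremum argument to close; this is essentially the induction principle on time scales. The remaining care is bookkeeping: checking that membership in $S$ is genuinely continued at both right-scattered and right-dense points, and that the supremum is attained, which relies on the continuity of $u$ (in particular at left-dense points) established at the outset.
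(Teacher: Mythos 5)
Your proof is correct. A point worth noting: the paper does not actually prove this proposition --- it is recalled verbatim from the literature, with a citation to Corollary~1.68, p.~25 of the book of Bohner and Peterson (reference \cite{bohn} of the paper), where it is deduced from a mean value inequality that is itself proved by the induction principle on time scales. Your supremum-with-slack argument is a self-contained unwinding of exactly that mechanism: the set $S=\{t\in[a,r]\cap\T : |u(t)-u(a)|\le\epsilon(t-a)\}$ and the case split at $s^\star=\sup S$ (right-scattered versus right-dense) is the induction principle in inline form, with the $\epsilon$-slack playing the role it plays in the mean value estimate. All the delicate points are handled: continuity of $u$ on all of $\T$ (including at $b$ when $b$ is left-scattered, hence isolated), so that $S$ is closed and its supremum is attained; the fact that $s^\star<r\le b$ forces $s^\star\in\TK$, so the hypothesis $u^\DD(s^\star)=0$ is available; the correct use of items~2 and~3 of Proposition~\ref{proprappeldelta} in the two cases; and the existence of points of $\T$ in $(s^\star,s^\star+\delta)$ when $s^\star$ is right-dense. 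So relative to the paper your proof is strictly more informative (the paper defers to a citation), while relative to the textbook source it is the same underlying argument, compressed into a single continuation step rather than factored through a separately stated mean value theorem.
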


From Proposition~\ref{proprappeldelta} and for every $t\in\RS$, note that a function $u$ is $\DD$-differentiable at $t$ if and only if $u$ is continuous at $t$. Still from Proposition~\ref{proprappeldelta}, note that every $\DD$-differentiable function on $\TK$ is continuous on $\T$. 

Recall that a function $u$ is said to be rd-continuous on $\T$ if it is continuous at every $t \in \RD$ and if it admits a left-sided limit at every $t \in \LD$, see \cite[Definition~1.58 p.22]{bohn}. We respectively denote by $\CCC^0_{\mathrm{rd}}(\T)$ and $\CCC^{1,\DD}_{\mathrm{rd}}(\T)$ the functional spaces of rd-continuous functions on $\T$ and of $\DD$-differentiable functions on $\TK$ with rd-continuous $\DD$-derivative. Recall the following results, see \cite[Theorem~1.60 p.22]{bohn}:
\begin{itemize}
\item $\sigma$ is rd-continuous.
\item if $u \in \CCC^0_{\mathrm{rd}}(\T)$, the composition $u^\sigma = u \circ \sigma$ is rd-continuous.
\item if $u \in \CCC^0_{\mathrm{rd}}(\T)$, the composition $f \circ u$ with any continuous function $f$ is rd-continuous.
\end{itemize}
Let us denote by $\int \DD \tau$ the Cauchy $\DD$-integral defined in \cite[p.26]{bohn}. For every $u \in \CCC^0_{\mathrm{rd}}(\TK)$, recall that the function $U$, defined by $U(t) = \int_a^t u(\tau) \DD \tau$ for every $t \in \T$, is the unique $\DD$-antiderivative of $u$ (in the sense that $U^\DD = u$ on $\TK$) vanishing at $t=a$, see \cite[Theorem~1.74 p.27]{bohn}. In particular, we have $U \in \CCC^{1,\DD}_{\mathrm{rd}}(\T)$. 

\subsection{Recalls on nonshifted calculus of variations on general time scales}\label{section12}
In this section, we recall some results on nonshifted calculus of variations on general time scales
provided in \cite{cres9,torr14,hils6}. Let $L$ be a Lagrangian \textit{i.e.} a continuous map of class $\CC^1$ in its two first variables
\begin{equation}
\fonction{L}{\R^n \times \R^n \times \TK}{\R}{(x,v,t)}{L(x,v,t)}
\end{equation}
and let $\L$ be the following (nonshifted) Lagrangian functional:
\begin{equation}
\fonction{\L}{\mathrm{C}^{1,\DD}_{\mathrm{rd}}(\T)}{\R}{u}{\di \int_a^b L(u(\tau),u^\DD(\tau),\tau) \; \DD \tau.}
\end{equation}

In this section, our aim is to give a necessary condition for local optimizers of $\L$ (with or without boundary conditions at $t=a$ and $t=b$). In this way, we introduce the following notions and notations:
\begin{itemize}
\item $\mathrm{C}^{1,\DD}_{\mathrm{rd},0} (\T) = \{ w \in \mathrm{C}^{1,\DD}_{\mathrm{rd}} (\T), \; w(a)= w(b) = 0\}$ is called the set of \textit{variations} of $\L$.
\item $u \in \mathrm{C}^{1,\DD}_{\mathrm{rd}}(\T)$ is said to be a \textit{critical point} of $\L$ if $D\L(u)(w) = 0$ for every $w \in \mathrm{C}^{1,\DD}_{\mathrm{rd},0}(\T)$. Let us precise that $D\L(u)(w)$ denotes the G\^ateaux-differential of $\L$ at $u$ in direction $w$.
\end{itemize}
In particular, if $u$ is a local optimizer of $\L$, then $u$ is a critical point of $\L$. Finally, let us recall the following characterization of the critical points of $\L$, see \cite[Theorem~11]{cres9}, \cite[Corollary~1]{torr14} or \cite[Theorem~4]{hils6}.

\begin{proposition}\label{propintel}
Let $u \in \mathrm{C}^{1,\DD}_{\mathrm{rd}}(\T)$. Then, $u$ is a critical point of $\L$ if and only if there exists $c \in \R^n$ such that
\begin{equation}\label{eqintel}\tag{EL${}^{\DD}_{\text{int}}$}
\dfrac{\partial L}{\partial v} (u(t),u^\DD(t),t) = \di \int_a^{\sigma (t)}  \dfrac{\partial L}{\partial x} (u(\tau),u^\DD(\tau),\tau) \; \DD \tau + c,
\end{equation}
for every $t \in \TK$.
\end{proposition}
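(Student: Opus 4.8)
The plan is to characterize the critical points by first computing the G\^ateaux-differential $D\L(u)(w)$ explicitly, and then reducing the condition ``$D\L(u)(w)=0$ for every variation $w$'' to the pointwise identity \eqref{eqintel} via a fundamental-lemma argument. First I would fix $u \in \CCC^{1,\DD}_{\mathrm{rd}}(\T)$ and $w \in \CCC^{1,\DD}_{\mathrm{rd},0}(\T)$ and differentiate $\L(u+sw)$ with respect to $s$ at $s=0$. Since $L$ is of class $\CC^1$ in its two first variables and since $u$, $w$, $u^\DD$, $w^\DD$ are rd-continuous (hence bounded on the compact $\T$), differentiation under the $\DD$-integral is licit and yields
\begin{equation*}
D\L(u)(w) = \di\int_a^b \left[ \dfrac{\partial L}{\partial x}(u,u^\DD,\cdot)\cdot w + \dfrac{\partial L}{\partial v}(u,u^\DD,\cdot)\cdot w^\DD \right] \DD\tau .
\end{equation*}

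Next I would introduce the $\DD$-antiderivative $F(t) = \int_a^t \frac{\partial L}{\partial x}(u(\tau),u^\DD(\tau),\tau)\,\DD\tau$, which belongs to $\CCC^{1,\DD}_{\mathrm{rd}}(\T)$ with $F^\DD = \frac{\partial L}{\partial x}(u,u^\DD,\cdot)$ on $\TK$, by the recalled properties of the Cauchy $\DD$-integral (the integrand is rd-continuous as a composition of the continuous map $\partial L/\partial x$ with rd-continuous arguments). Applying the time scale integration by parts formula to $\int_a^b w\cdot F^\DD\,\DD\tau$ and using $w(a)=w(b)=0$ to cancel the boundary term, the first term becomes $-\int_a^b F^\sigma\cdot w^\DD\,\DD\tau$; this is precisely where the shift $\sigma$ enters and produces the upper bound $\sigma(t)$ in \eqref{eqintel}. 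Collecting terms gives
\begin{equation*}
D\L(u)(w) = \di\int_a^b \left[ \dfrac{\partial L}{\partial v}(u,u^\DD,\cdot) - F^\sigma \right]\cdot w^\DD \,\DD\tau ,
\end{equation*}
where $F^\sigma(t) = \int_a^{\sigma(t)} \frac{\partial L}{\partial x}(u,u^\DD,\cdot)\,\DD\tau$. At this point the sufficiency (``if'') direction is immediate: if the bracket equals a constant $c$, then $D\L(u)(w) = c\cdot(w(b)-w(a)) = 0$ for every variation $w$, so $u$ is critical.

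For the necessity (``only if'') direction I would invoke the fundamental (du Bois-Reymond) lemma of the calculus of variations on time scales: an rd-continuous map $g$ satisfying $\int_a^b g\cdot w^\DD\,\DD\tau = 0$ for every $w \in \CCC^{1,\DD}_{\mathrm{rd},0}(\T)$ is necessarily constant on $\TK$. Here the bracket $g = \frac{\partial L}{\partial v}(u,u^\DD,\cdot) - F^\sigma$ is rd-continuous, being the difference of a composition of the continuous map $\partial L/\partial v$ with rd-continuous arguments and of $F^\sigma$ (a composition of the continuous function $F$ with $\sigma$). Once $g$ is known to be a constant $c$, the equation \eqref{eqintel} follows on all of $\TK$.

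The main obstacle is the du Bois-Reymond lemma itself, which I would prove by the standard device: set $c = \frac{1}{b-a}\int_a^b g\,\DD\tau$, so that $\int_a^b (g-c)\,\DD\tau = 0$, and define the candidate variation $w(t) = \int_a^t (g(\tau)-c)\,\DD\tau$. One must check that $w$ is a legitimate variation, i.e.\ $w \in \CCC^{1,\DD}_{\mathrm{rd},0}(\T)$ with $w^\DD = g-c$ on $\TK$ and $w(a)=w(b)=0$, which holds precisely because $g-c$ is rd-continuous and its $\DD$-integral over $[a,b]$ vanishes. Testing the critical point identity against this $w$ gives $0 = \int_a^b g\cdot w^\DD\,\DD\tau = \int_a^b (g-c)\cdot(g-c)\,\DD\tau = \int_a^b |g-c|^2\,\DD\tau$, and since a nonnegative rd-continuous function with zero $\DD$-integral vanishes on $\TK$, we conclude $g=c$ there. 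This last step is the crux, as it both manufactures an admissible test function out of $g$ and transfers the integral identity into the desired pointwise constancy.
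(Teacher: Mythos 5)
Your proof is correct, and it is essentially the proof behind this statement: the paper itself does not prove Proposition~\ref{propintel} but recalls it from \cite[Theorem~11]{cres9} and \cite[Corollary~1]{torr14}, where the argument is exactly the one you give --- compute the first variation, absorb the $\partial L/\partial x$ term via its $\DD$-antiderivative and the time scale integration by parts formula (which is where the shift $\sigma$ enters), and conclude with the du Bois-Reymond lemma proved by testing against $w(t)=\int_a^t (g(\tau)-c)\,\DD\tau$. All the supporting facts you invoke (rd-continuity of the bracket, $U=\int_a^{\cdot}u\,\DD\tau\in\CCC^{1,\DD}_{\mathrm{rd}}(\T)$ with $U^\DD=u$, and vanishing of a nonnegative rd-continuous function with zero $\DD$-integral) are available from the recalls in Section~\ref{section11}, so the argument is complete.
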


Hence, Proposition~\ref{propintel} provides a necessary condition for local optimizers of $\L$. Precisely, if $u$ is a local optimizer of $\L$, then there exists $c \in \R^n$ such that $u$ satisfies the $\DD$-integral Euler-Lagrange equation~\eqref{eqintel}. We refer to Example~\ref{ex1} in Section~\ref{section13} for an application of Proposition~\ref{propintel}.

\subsection{Main result}\label{section13}

In this paper, we aim to $\nabla$-differentiate the $\DD$-integral Euler-Lagrange equation~\eqref{eqintel} in order to get a $\nabla \circ \DD$-differential one of type \eqref{eqdiffelintro}. Precisely, we prove the following result under the assumption of $\nabla$-differentiability of $\sigma$.

\begin{theorem}[Main result]\label{thmmain}
Let us assume that $\sigma$ is $\nabla$-differentiable on $\Tk$ and let $u \in \mathrm{C}^{1,\DD}_{\mathrm{rd}}(\T)$. Then, $u$ is a critical point of $\L$ if and only if $u$ is a solution of the following $\nabla \circ \DD$-differential Euler-Lagrange equation:
\begin{equation}\label{eqdiffel}\tag{EL${}^{\nabla \circ \DD}_{\text{diff}}$}
\left[ \dfrac{\partial L}{\partial v} (u,u^\DD,\cdot) \right]^\nabla (t) =  \sigma^\nabla (t) \dfrac{\partial L}{\partial x} (u(t),u^\DD(t),t) ,
\end{equation}
for every $t \in \TKk$.
\end{theorem}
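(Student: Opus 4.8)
The plan is to derive \eqref{eqdiffel} from the integral Euler-Lagrange equation \eqref{eqintel} of Proposition~\ref{propintel} by applying the $\nabla$-derivative to both sides, using the $\nabla$-differentiability of $\sigma$ to handle the shifted upper bound of the $\DD$-integral. Since Proposition~\ref{propintel} already establishes that $u$ is a critical point of $\L$ if and only if there exists $c \in \R^n$ such that
\begin{equation*}
\dfrac{\partial L}{\partial v} (u(t),u^\DD(t),t) = \di \int_a^{\sigma (t)} \dfrac{\partial L}{\partial x} (u(\tau),u^\DD(\tau),\tau) \; \DD \tau + c
\end{equation*}
for every $t \in \TK$, it suffices to prove that this integral equation is equivalent, on $\TKk$, to the differential equation \eqref{eqdiffel}. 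For brevity write $F(t) = \tfrac{\partial L}{\partial x}(u(t),u^\DD(t),t)$ and $g(t) = \tfrac{\partial L}{\partial v}(u(t),u^\DD(t),t)$; note $F \in \CCC^0_{\mathrm{rd}}(\TK)$ since it is the composition of a continuous map with rd-continuous functions. Define the $\DD$-antiderivative $G(t) = \int_a^t F(\tau)\,\DD\tau$, so that $G \in \CCC^{1,\DD}_{\mathrm{rd}}(\T)$ with $G^\DD = F$ on $\TK$, and the right-hand side above reads $G(\sigma(t)) + c = G^\sigma(t) + c$.

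The heart of the argument is therefore to $\nabla$-differentiate the composition $G^\sigma = G \circ \sigma$. First I would establish a chain-rule-type formula showing that, whenever $\sigma$ is $\nabla$-differentiable at $t \in \Tk$ and $G \in \CCC^{1,\DD}_{\mathrm{rd}}(\T)$, the composition $G^\sigma$ is $\nabla$-differentiable at $t$ with
\begin{equation*}
(G^\sigma)^\nabla(t) = G^\DD(t)\,\sigma^\nabla(t) = F(t)\,\sigma^\nabla(t).
\end{equation*}
This is the key identity: the appearance of $\sigma^\nabla$ as the weight $\omega$ in \eqref{eqdiffelintro} comes precisely from this chain rule. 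I expect this to be the main obstacle, because the standard time-scale chain rule is delicate — $\sigma$ is not in general $\DD$- or $\nabla$-differentiable, the relevant composition involves $\DD$-differentiation of $G$ against $\nabla$-differentiation of $\sigma$, and one must control the behaviour separately at left-dense and left-scattered points of $\TKk$ (using the pointwise formulas in Proposition~\ref{proprappeldelta} and their $\nabla$-analogues). The structural results on time scales with $\nabla$-differentiable $\sigma$ announced for Section~\ref{section22} are presumably what make this composition formula rigorous, so I would invoke them here. Given this identity, applying $\nabla$ to both sides of the integral equation on $\TKk$ yields $g^\nabla(t) = (G^\sigma)^\nabla(t) = \sigma^\nabla(t) F(t)$, which is exactly \eqref{eqdiffel}; the constant $c$ disappears under differentiation.

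For the converse direction, suppose $u$ solves \eqref{eqdiffel} on $\TKk$. Then $g^\nabla = (G^\sigma)^\nabla$ on $\TKk$, so $(g - G^\sigma)^\nabla = 0$ there. Applying the $\nabla$-analogue of Proposition~\ref{proprappeldelta2} (a function with vanishing $\nabla$-derivative is constant), I obtain that $g - G^\sigma$ is constant, i.e. there exists $c \in \R^n$ with $g(t) = G^\sigma(t) + c = \int_a^{\sigma(t)} F(\tau)\,\DD\tau + c$ for every $t \in \Tk$, which is \eqref{eqintel}; one checks this extends to all of $\TK$ by continuity or by direct verification at the endpoint. By Proposition~\ref{propintel}, $u$ is then a critical point of $\L$. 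A minor point requiring care is the exact domain matching — \eqref{eqintel} holds on $\TK$ while \eqref{eqdiffel} is stated on $\TKk = \TK \cap \Tk$ — so I would verify that the passage between the two domains introduces no gap, relying on $\TKk \neq \emptyset$ and the density structure of $\T$.
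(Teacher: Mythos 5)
Your proposal is correct and takes essentially the same route as the paper: the paper's own proof likewise combines Proposition~\ref{propintel} with the chain rule $(u^\sigma)^\nabla(t) = \sigma^\nabla(t)\,u^\DD(t)$ (Theorem~\ref{thm1} and Corollary~\ref{cor1} of Section~\ref{section22}), applied to the $\DD$-antiderivative of $\partial L/\partial x\,(u,u^\DD,\cdot)$, together with the $\nabla$-analogue of Proposition~\ref{proprappeldelta2} to pass back from the differential to the integral equation. The composition formula you flag as the main obstacle is precisely what Section~\ref{section22} establishes, so your argument matches the paper's with no genuine gap.
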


\begin{proof}
We refer to Propositions~\ref{proprappeldelta2} and \ref{propintel} and Corollary~\ref{cor1} in Section~\ref{section23}.
\end{proof}

Note that this result encompasses both the usual continuous and discrete Euler-Lagrange equations given by \eqref{eqcontintroel} and \eqref{eqnonshiftdiscrintro} in Introduction. Indeed, as it is mentioned in Example~\ref{ex3} in Section~\ref{section22}, the following properties are satisfied:
\begin{itemize}
\item if $\T = [a,b]$, $\sigma$ is $\nabla$-differentiable on $\Tk$ with $\sigma^\nabla = 1$.
\item if $\T =\{ a = t_0 < \ldots < t_N = b \}$, $\sigma$ is $\nabla$-differentiable on $\Tk$ with $\sigma^\nabla = \mu / \nu$.
\end{itemize}

Let us prove, from the following simple example, that the assumption of $\nabla$-differentiability of $\sigma$ is sharp for the validity of Theorem~\ref{thmmain} when the Lagrangian $L$ is general (\textit{i.e.} not specified).

\begin{example}\label{ex1}
Let us consider $n=1$, $L(x,v,t)= x+v^2 /2$ and $u \in \CCC^{1,\DD}_\mathrm{rd}(\T)$ defined by $u(t)=\int_a^t \sigma (\tau) \DD\tau$ for every $t \in \T$. Since $u$ satisfies \eqref{eqintel} with $c=a$, we conclude that $u$ is a critical point of $\L$, see Proposition~\ref{propintel}. However, note that $\partial L / \partial v (u,u^\DD,\cdot) = u^\DD = \sigma$ and consequently, the $\nabla \circ \DD$-differential Euler-Lagrange equation~\eqref{eqdiffel} has no sense if $\sigma$ is not $\nabla$-differentiable.
\end{example}

Nevertheless, the following example proves that the $\nabla$-differentiability of $\sigma$ is not necessary for some Lagrangian (\textit{e.g.} independent of the variable $x$).

\begin{example}\label{ex11}
Let us consider $n=1$, $L(x,v,t)= v^2 /2$ and $u \in \CCC^{1,\DD}_\mathrm{rd}(\T)$ defined by $u(t)=t$ for every $t \in \T$. Since $u$ satisfies \eqref{eqintel} with $c=1$, we conclude that $u$ is a critical point of $\L$, see Proposition~\ref{propintel}. Note that $u$ also satisfies \eqref{eqdiffel} even if $\sigma$ is not $\nabla$-differentiable. This phenomena comes from $\partial L / \partial x =0$ and consequently \eqref{eqintel} is independent of $\sigma$ and it can be $\nabla$-differentiated.
\end{example}

However, it has to be noted that the independence of the Lagrangian $L$ with respect to the variable $x$ is a very restrictive assumption while the $\nabla$-differentiability of $\sigma$ is not. We refer to Example~\ref{ex2} for time scales with continuous and noncontinuous $\sigma$. We respectively refer to Examples~\ref{ex3} and \ref{ex4} for time scales with $\nabla$-differentiable and non-$\nabla$-differentiable $\sigma$.

%

\section{Time scales with $\nabla$-differentiable $\sigma$}\label{section2}
In Section~\ref{section21}, we study the consequences of the continuity of $\sigma$. Then, we study the consequences of the $\nabla$-differentiability of $\sigma$ in Section~\ref{section22}. From these preliminaries, we prove the most important results of this section (Proposition~\ref{thm1} and some corollaries) in Section~\ref{section23}. In particular, Corollary~\ref{cor1} is instrumental to prove our main result (Theorem~\ref{thmmain}).

\subsection{Continuity of $\sigma$}\label{section21}
Let us prove the following characterizations of the continuity of $\sigma$ at a given point $t \in \Tk$.

\begin{lemma}\label{propcont}
Let $t \in \Tk$. The following properties are equivalent:
\begin{enumerate}
\item $\sigma$ is continuous at $t$;
\item $\sigma \circ \rho (t) = t$;
\item $t \notin \RS \cap \LD$.
\end{enumerate}
\end{lemma}

\begin{proof}
Let us prove that $1.$ implies $2.$. If $t=a \in \Tk$, then $a \in \RD$. Then, $\sigma \circ \rho (a) = \sigma (a) = a$. If $t \neq a$, let us assume by contradiction that $\sigma \circ \rho (t) \neq t$. Necessarily, we have $t \in \RS \cap \LD$. Then, let $(s_k) \subset \T$ be a sequence such that $s_k < t$ for any $k \in \N$ and $s_k \to t$. Thus, we have $\sigma (s_k) < t < \sigma (t)$ for any $k \in \N$ and consequently, $(\sigma (s_k))$ does not tend to $\sigma (t)$. This is a contradiction with the continuity of $\sigma$ at $t$.

Let us prove that $2.$ implies $3.$. If $ t \in  \RS \cap \LD$, then $\sigma \circ \rho (t) = \sigma (t) \neq t$.

Let us prove that $3.$ implies $1.$. By contradiction, let us assume that $\sigma$ is not continuous at $t$. As a consequence, there exist $\varepsilon > 0$ and a monotone sequence $(s_k) \subset \T$ such that $s_k \to t$ and $\vert \sigma (t) - \sigma (s_k) \vert \geq \varepsilon$ for every $k \in \N$. Firstly, let us assume that $(s_k)$ is decreasing. Then, $t \in \RD$ and we have $t < s_k < s_{k-1}$. As a consequence, $t = \sigma (t) < \sigma (s_k) \leq s_{k-1}$ for any $k \in \N^*$. It is a contradiction since $s_{k-1} \to t$. Secondly, let us assume that $(s_k)$ is increasing. As a consequence, $t \in \LD$ and then $t \notin \RS$ (see 3.) \textit{i.e.} $\sigma(t) = t$. Finally, we have $s_{k-1} < s_k < t$ and then, $s_{k-1} < \sigma (s_k) < t = \sigma(t)$ for any $k \in \N^*$. It is a contradiction since $s_{k-1} \to t$. In both cases, we have obtained a contradiction.
\end{proof}

Note that $\sigma$ is always continuous at $a$. Indeed, if $a \in \RD$, then $a \in \Tk$, $a \notin \RS \cap \LD$ and Lemma~\ref{propcont} concludes. If $a \in \RS$, then $a$ is isolated and thus, $\sigma$ is continuous at $a$. This remark and Lemma~\ref{propcont} lead to the following proposition.

\begin{proposition}\label{propsigmacont1}
$\sigma$ is continuous on $\T$ if and only if $\RS \cap \LD = \emptyset$.
\end{proposition}
%

A similar remark is already done in \cite[Example~1.55]{bohn}. Let us see some examples and counter-examples.

\begin{example}\label{ex2}
\begin{enumerate}
\item If $\T =[a,b]$, $\sigma$ is continuous on $\T$.
\begin{center}
\begin{tikzpicture}
	\begin{scope}[scale=4,shift={(0,0)},x=15pt,y=15pt]
		\draw (-2,0)--(2,0);
		\draw[line width=1.3pt] (-1,0)--(1,0);
		\node at (-1,0) {$|$};
		\node at (1,0) {$|$};
		\draw (-1.11,0.26) node[anchor=north west] {$a$};
		\draw (0.92,0.30) node[anchor=north west] {$b$};
		\draw[decorate,decoration={brace,raise=0.25cm}] (1,0) -- (-1,0) node[below=10,pos=0.5] {$\notin \RS$};
	\end{scope}
\end{tikzpicture}
\end{center}
\item If $\T = \{ a = t_0 < \ldots < t_N = b \}$, $\sigma$ is continuous on $\T$.
\begin{center}
\begin{tikzpicture}
	\begin{scope}[scale=4,shift={(0,0)},x=15pt,y=15pt]
		\draw (-2,0)--(2,0);
		\node at (-1,0) {$|$};
		\node at (-0.4,0) {$|$};
		\node at (0.5,0) {$|$};
		\node at (1,0) {$|$};
		\node at (0.03,0.18) {$\ldots$};
		\draw (-1.42,0.3) node[anchor=north west] {$a=t_0$};
		\draw (-0.51,0.3) node[anchor=north west] {$t_1$};
		\draw (0.3,0.3) node[anchor=north west] {$t_{N-1}$};
		\draw (0.87,0.31) node[anchor=north west] {$t_N = b$};
		\draw[decorate,decoration={brace,raise=0.25cm}] (1,0) -- (-1,0) node[below=10,pos=0.5] {$\notin \LD$};
	\end{scope}
\end{tikzpicture}
\end{center}
\item If $\T = \{ 0,1 \} \cup [2,3]$, $\sigma$ is continuous on $\T$. 
\begin{center}
\begin{tikzpicture}
	\begin{scope}[scale=4,shift={(0,0)},x=15pt,y=15pt]
		\draw (-2,0)--(2,0);
		\draw[line width=1.3pt] (1/3,0)--(1,0);
		\node at (-1,0) {$|$};
		\node at (-1,-0.21) {$\notin \LD$};
		\node at (-1/3,-0.21) {$\notin \LD$};
		\node at (-1/3,0) {$|$};
		\node at (1/3,0) {$|$};
		\node at (1,0) {$|$};
		\draw (-1.1,0.3) node[anchor=north west] {$0$};
		\draw (-0.43,0.3) node[anchor=north west] {$1$};
		\draw (0.23,0.3) node[anchor=north west] {$2$};
		\draw (0.9,0.30) node[anchor=north west] {$3$};
		\draw[decorate,decoration={brace,raise=0.25cm}] (1,0) -- (1/3,0) node[below=10,pos=0.5] {$\notin \RS$};
	\end{scope}
\end{tikzpicture}
\end{center}
\item If $\T = [-1,0] \cup \{ 1/k, \; k \in \N^* \}$, $\sigma$ is continuous on $\T$.
\begin{center}
\begin{tikzpicture}
	\begin{scope}[scale=4,shift={(0,0)},x=15pt,y=15pt]
		\draw (-2,0)--(2,0);
		\draw[line width=1.3pt] (-1,0)--(0,0);
		\node at (-1,0) {$|$};
		\node at (0,0) {$|$};
		\node at (1/2,0) {$|$};
		\node at (1/3,0) {$|$};
		\node at (1/6,0) {$|$};
		\node at (5/48,0) {$|$};
		\node at (7/96,0) {$|$};
		\node at (1/24,0) {$|$};
		\node at (1/48,0) {$|$};
		\node at (1,0) {$|$};
		\draw (-1.1,0.3) node[anchor=north west] {$-1$};
		\draw (-0.11,0.3) node[anchor=north west] {$0$};
		\draw (0.9,0.30) node[anchor=north west] {$1$};
		\draw[decorate,decoration={brace,raise=0.25cm}] (1,0) -- (0,0) node[below=10,pos=0.5] {$\notin \LD$};
		\draw[decorate,decoration={brace,raise=0.25cm}] (0,0) -- (-1,0) node[below=10,pos=0.5] {$\notin \RS$};
	\end{scope}
\end{tikzpicture}
\end{center}
\item If $\T = [0,1] \cup [2,3]$, $\sigma$ is not continuous at $1 \in \RS \cap \LD$.
\begin{center}
\begin{tikzpicture}
	\begin{scope}[scale=4,shift={(0,0)},x=15pt,y=15pt]
		\draw (-2,0)--(2,0);
		\draw[line width=1.3pt] (-1/3,0)--(-1,0);
		\draw[line width=1.3pt] (1/3,0)--(1,0);
		\node at (-1,0) {$|$};
		\node at (-1/3,-0.21) {$\in \RS \cap \LD$};
		\node at (-1/3,0) {$|$};
		\node at (1/3,0) {$|$};
		\node at (1,0) {$|$};
		\draw (-1.1,0.3) node[anchor=north west] {$0$};
		\draw (-0.43,0.3) node[anchor=north west] {$1$};
		\draw (0.23,0.3) node[anchor=north west] {$2$};
		\draw (0.9,0.30) node[anchor=north west] {$3$};
	\end{scope}
\end{tikzpicture}
\end{center}
\item If $\T$ is the usual Cantor set (see \cite[Example~1.47 p.18]{bohn}), $\sigma$ is not continuous at $1/3 \in \RS \cap \LD$. 
\end{enumerate}
\end{example}

\begin{remark}
Let us give a short discussion on the notion of \textit{regular} time scale originally introduced in \cite[Definition~9]{gurs}. We refer to \cite{bart,torr10} for other applications of this notion. Recall that $\T$ is said to be regular if for every $t \in \T$, $\sigma \circ \rho (t) = \rho \circ \sigma (t) = t$. In particular, for every bounded regular time scale (containing at least two elements), $a$ is necessarily right-dense and $b$ is necessarily left-dense, see \cite[Proposition~10]{gurs}. Hence, all finite time scales (containing at least two elements) are not regular. The regularity of a time scale is then an assumption relatively restrictive. 

Consequently, we suggest the introduction of the following weakened notion: a time scale is said to be quasi-regular if $\sigma$ and $\rho$ are continuous on $\T$. Hence, a time scale is quasi-regular if and only if $\sigma \circ \rho (t)= t$ for every $t \in \Tk$ and $\rho \circ \sigma (t) = t$ for every $t \in \TK$. Such a weakened notion allows to cover the finite time scales and to preserve the essence of the initial notion.
\end{remark}

\subsection{$\nabla$-differentiability of $\sigma$}\label{section22}

From Lemma~\ref{propcont} and from the nabla version of Proposition~\ref{proprappeldelta}, we derive the following result.

\begin{proposition}\label{propcont2}
The following properties are satisfied:
\begin{enumerate}
\item if $\sigma$ is continuous at $t \in \LS$, then $\sigma$ is directly $\nabla$-differentiable at $t$ with $\sigma^\nabla (t) = \mu (t) / \nu (t)$.
\item if $\sigma$ is continuous on $\T$, then $\sigma$ is $\nabla$-differentiable on $\Tk$ if and only if for every $t \in \Tk$ such that $\rho (t) = t$, the following limit exists in $\R$:
\begin{equation}
\lim\limits_{\substack{s \to t \\ s \neq t }} \dfrac{\sigma (s) -t}{s-t}.
\end{equation}
In such a case, this limit is equal to $\sigma^\nabla (t)$.
\end{enumerate}
\end{proposition}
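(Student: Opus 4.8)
The plan is to prove both items directly from the definition of $\nabla$-differentiability recalled in Section~\ref{section11}, using the structural characterizations of continuity of $\sigma$ established in Proposition~\ref{propcont}. For item~1, let $t \in \LS$ and assume $\sigma$ is continuous at $t$. Since $t$ is left-scattered, $\nu(t) = t - \rho(t) > 0$, and by the $\nabla$-analogue of item~2 of Proposition~\ref{proprappeldelta} (the backward counterpart of the formula $u^\DD(t) = (u(\sigma(t))-u(t))/\mu(t)$ valid at right-scattered points), a function that is continuous at a left-scattered point is automatically $\nabla$-differentiable there with $\sigma^\nabla(t) = (\sigma(t) - \sigma(\rho(t)))/\nu(t)$. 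The key computation is then to simplify $\sigma(\rho(t))$: by Proposition~\ref{propcont}, continuity of $\sigma$ at $t$ gives $\sigma \circ \rho(t) = t$, so the numerator becomes $\sigma(t) - t = \mu(t)$, yielding $\sigma^\nabla(t) = \mu(t)/\nu(t)$ as claimed.

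For item~2, assume $\sigma$ is continuous on all of $\T$. The set $\Tk$ of points where $\nabla$-differentiability can be asked splits as $(\LS \cap \Tk) \cup (\LD \cap \Tk)$. On the left-scattered part, item~1 already furnishes $\nabla$-differentiability unconditionally (continuity is global by hypothesis), so these points impose no extra condition. Hence $\sigma$ is $\nabla$-differentiable on $\Tk$ if and only if it is $\nabla$-differentiable at every $t \in \LD \cap \Tk$. At such a left-dense point one has $\rho(t) = t$, so the $\nabla$-analogue of item~3 of Proposition~\ref{proprappeldelta} applies: $\nabla$-differentiability at $t$ is equivalent to the existence of the limit $\lim_{s \to t,\, s \neq t} (\sigma(t) - \sigma(s))/(t - s)$, and when it exists it equals $\sigma^\nabla(t)$. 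It then remains to reconcile this with the limit written in the statement, namely $\lim_{s \to t,\, s \neq t}(\sigma(s) - t)/(s - t)$; these agree because continuity of $\sigma$ at the left-dense point $t$ forces $\sigma(t) = t$, so $(\sigma(t) - \sigma(s))/(t-s) = (t - \sigma(s))/(t - s) = (\sigma(s) - t)/(s - t)$, identical for every $s$.

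The main obstacle I expect is purely bookkeeping rather than conceptual: one must be careful to invoke the correct $\nabla$-versions of the three parts of Proposition~\ref{proprappeldelta} (the excerpt states these analogues hold but does not write them out), and one must repeatedly use Proposition~\ref{propcont} to replace $\sigma \circ \rho(t)$ by $t$ and to deduce $\sigma(t) = t$ at left-dense points. The only genuine subtlety is ensuring that at a left-dense point the continuity hypothesis indeed yields $\sigma(t) = t$; this follows since $t \in \LD$ means there is a sequence $s_k \to t$ with $s_k < t$, whence $t \le \sigma(s_k) \le t$ by the squeeze $s_k < \sigma(s_k) \le t$ combined with continuity $\sigma(s_k) \to \sigma(t)$, forcing $\sigma(t) = t$. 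Everything else is a direct substitution into the defining difference quotients.
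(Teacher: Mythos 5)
Your proposal is correct and follows essentially the same route as the paper: item~1 via the $\nabla$-analogue of the scattered-point formula of Proposition~\ref{proprappeldelta} combined with $\sigma \circ \rho(t) = t$ from Proposition~\ref{propcont}, and item~2 by splitting $\Tk$ into its left-scattered part (covered by item~1) and its left-dense part, where the defining difference quotient applies and the identity $\sigma(t) = t$ (i.e.\ $\LD \cap \Tk \subset \RD$ under continuity, which the paper cites from Proposition~\ref{propcont} and you also re-derive by a squeeze argument) reconciles the two limits. No gaps; your extra verification that continuity at a left-dense point forces $\sigma(t)=t$ is exactly the content the paper extracts from Proposition~\ref{propcont}.
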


\begin{proof}
Let us prove the first point. From the nabla version of Proposition~\ref{proprappeldelta} and since $\sigma$ is continuous at $t \in \LS \subset \Tk$, $\sigma$ is directly $\nabla$-differentiable at $t$ with
\begin{equation}
\sigma^\nabla (t) = \dfrac{\sigma(t) - \sigma (\rho (t))}{\nu (t)} = \dfrac{\sigma(t) - t}{\nu (t)} = \dfrac{\mu(t)}{\nu (t)},
\end{equation}
since $\sigma \circ \rho (t) =t$ from Lemma~\ref{propcont}.

Let us prove the second point. Since $\sigma$ is continuous on $\T$, $\sigma$ is directly $\nabla$-differentiable at every $t \in \LS$ from the first point. Consequently, $\sigma$ is $\nabla$-differentiable on $\Tk$ if and only if $\sigma$ is $\nabla$-differentiable at every $t \in \Tk$ such that $\rho (t) = t$ \textit{i.e.} if and only if for every $t \in \Tk$ such that $\rho (t) = t$, the following limit exists in $\R$:
\begin{equation}
\lim\limits_{\substack{s \to t \\ s \neq \rho (t) }} \dfrac{\sigma (s) - \sigma (\rho(t))}{s-\rho(t)} = \lim\limits_{\substack{s \to t \\ s \neq t }} \dfrac{\sigma (s) - \sigma (t)}{s-t}.
\end{equation}
To conclude, it is sufficient to note that if $t=a \in \Tk$, then $a \in \RD$ and $\sigma (a)=a$ and if $t \neq a$, then $t \in \LD$ and the continuity of $\sigma$ implies that $t \notin \RS$ (see Proposition~\ref{propsigmacont1}) \textit{i.e.} $\sigma (t) = t$. The proof is complete.
\end{proof}

Let us give some examples of time scale with $\nabla$-differentiable $\sigma$.

\begin{example}\label{ex3}
\begin{enumerate}
\item If $\T =[a,b]$, $\sigma$ is $\nabla$-differentiable on $\Tk$ with $\sigma^\nabla = 1$.
\item If $\T = \{ a = t_0 < \ldots < t_N = b \}$, $\sigma$ is $\nabla$-differentiable on $\Tk$ with $\sigma^\nabla = \mu / \nu$.
\item If $\T = \{ 0 \} \cup \{ z_k, \; k \in \N \}$ where $(z_k)$ is a decreasing positive sequence tending to $0$ and if $\lim_{k \to \infty} z_{k-1}/z_k$ exists (denoted by $\ell$), then $\sigma$ is $\nabla$-differentiable on $\Tk$. In particular, we have $\sigma^\nabla (0) = \ell$. Indeed, let $(s_k) \subset \T$ be a positive sequence tending to $0$. Then, for every $k \in \N$, there exists $p_k \in \N$ such that $s_k = z_{p_k}$. Since $s_k \rightarrow 0$, we have $p_k \rightarrow +\infty$. Finally, we obtain
\begin{equation}
\lim\limits_{k \to \infty} \dfrac{\sigma (s_k) - 0}{s_k-0} = \lim\limits_{k \to \infty} \dfrac{z_{p_k -1}}{z_{p_k}} = \ell.
\end{equation}
\begin{center}
\begin{tikzpicture}
	\begin{scope}[scale=4,shift={(0,0)},x=15pt,y=15pt]
		\draw (-2,0)--(2,0);
		\node at (-1,0) {$|$};
		\node at (0,0) {$|$};
		\node at (-1/3,0) {$|$};
		\node at (-2/3,0) {$|$};
		\node at (-5/6,0) {$|$};
		\node at (-43/48,0) {$|$};
		\node at (-89/96,0) {$|$};
		\node at (-23/24,0) {$|$};
		\node at (-47/48,0) {$|$};
		\node at (1,0) {$|$};
		\draw (-1.12,0.32) node[anchor=north west] {$0$};
		\draw (-0.78,0.3) node[anchor=north west] {$z_k$};
		\draw (-0.48,0.3) node[anchor=north west] {$z_{k-1}$};
		\draw (-2/3,-0.30) node[anchor=south] {$s$};
		\draw (-1/3,-0.35) node[anchor=south] {$\sigma (s)$};
		\draw (0.88,0.30) node[anchor=north west] {$z_0$};
	\end{scope}
\end{tikzpicture}
\end{center}
\item Application: if $\T = \{ 0 \} \cup \{ 1/r^k, \; k \in \N \}$ with $r > 1$, then $\sigma$ is $\nabla$-differentiable on $\Tk$. In particular, we have $\sigma^\nabla (0) = r$.
\item Similarly to $3.$, we can prove that if $\T = \{ 0 \} \cup \{ z_k, \; k \in \N \}$ where $(z_k)$ is an increasing negative sequence tending to $0$ and if $\lim_{k \to \infty} z_{k+1}/z_k$ exists (denoted by $\ell$), then $\sigma$ is $\nabla$-differentiable on $\Tk$. In particular, we have $\sigma^\nabla (0) = \ell$.
\begin{center}
\begin{tikzpicture}
	\begin{scope}[scale=4,shift={(0,0)},x=15pt,y=15pt]
		\draw (-2,0)--(2,0);
		\node at (-1,0) {$|$};
		\node at (0,0) {$|$};
		\node at (1/3,0) {$|$};
		\node at (2/3,0) {$|$};
		\node at (5/6,0) {$|$};
		\node at (43/48,0) {$|$};
		\node at (89/96,0) {$|$};
		\node at (23/24,0) {$|$};
		\node at (47/48,0) {$|$};
		\node at (1,0) {$|$};
		\draw (-1.12,0.30) node[anchor=north west] {$z_0$};
		\draw (0.47,0.3) node[anchor=north west] {$z_{k+1}$};
		\draw (0.20,0.3) node[anchor=north west] {$z_{k}$};
		\draw (2/3,-0.37) node[anchor=south] {$\sigma (s)$};
		\draw (1/3,-0.31) node[anchor=south] {$s$};
		\draw (0.92,0.32) node[anchor=north west] {$0$};
	\end{scope}
\end{tikzpicture}
\end{center}
\item Application: if $\T = \{ 0 \} \cup \{ -1/r^k, \; k \in \N \}$ with $r > 1$, then $\sigma$ is $\nabla$-differentiable on $\Tk$. In particular, we have $\sigma^\nabla (0) = 1/r$.
\item Similarly to $3.$, we can prove that if $\T = [-1,0] \cup \{ z_k, \; k \in \N \}$ where $(z_k)$ is a decreasing positive sequence tending to $0$ and if $\lim_{k \to \infty} z_{k-1}/z_k = 1$, then $\sigma$ is $\nabla$-differentiable on $\Tk$. In particular, we have $\sigma^\nabla (0) = 1$.
\item Application: if $\T = [-1,0] \cup \{ 1/k^2, \; k \in \N^* \}$, then $\sigma$ is $\nabla$-differentiable on $\Tk$. In particular, we have $\sigma^\nabla (0) = 1$.
\item Similarly to $3.$, we can prove that if $\T = \{ 0 \} \cup \{ z^-_k, \; k \in \N \} \cup \{ z^+_k, \; k \in \N \}$ where $(z^-_k)$ (resp. $(z^+_k)$) is an increasing negative (resp. decreasing positive) sequence tending to $0$ and if $\lim_{k \to \infty} z^-_{k+1}/z^-_k = \lim_{k \to \infty} z^+_{k-1}/z^+_k = \ell$, then $\sigma$ is $\nabla$-differentiable on $\Tk$. In particular, we have $\sigma^\nabla (0) = \ell$. Note that, in such a case, we can only have $\ell = 1$ since $z^-_{k+1}/z^-_k < 1 < z^+_{k-1}/z^+_k$ for every $k \in \N^*$.
\item Application: if $\T = \{ 0 \} \cup \{ z^-_k, \; k \in \N \} \cup \{ z^+_k, \; k \in \N \}$ with $z^-_k = - 1/k$ and $z^+_k = 1 /k^2$ for every $k \in \N$, then $\sigma$ is $\nabla$-differentiable on $\Tk$. In particular, we have $\sigma^\nabla (0) = 1$.
\end{enumerate}
\end{example}

Let us give some examples of time scale with continuous but non-$\nabla$-differentiable $\sigma$.

\begin{example}\label{ex4}
\begin{enumerate}
\item If $\T = \{ 0 \} \cup \{ 1/k!, \; k\in \N \}$, then $\sigma$ is not $\nabla$-differentiable in $0$ since $k! / (k-1)! = k$ tends to $+\infty$.
\item If $\T = [-1,0] \cup \{ 1/2^k, \; k\in \N \}$, then $\sigma$ is not $\nabla$-differentiable in $0$ since $2^k / 2^{k-1} = 2$ does not tend to $1$.
\item If $\T = \{ 0 \} \cup \{ \pm 1/2^k, \; k\in \N \}$, then $\sigma$ is not $\nabla$-differentiable in $0$ since $2^k / 2^{k-1} = 2$, $2^k / 2^{k+1} = 1/2$ and $2 \neq 1/2$.
\end{enumerate}
\end{example}

Examples~\ref{ex2}, \ref{ex3} and \ref{ex4} allow to get a better understanding of the restrictions imposed on a time scale by the $\nabla$-differentiability of $\sigma$. Indeed, we conclude that such a time scale has to satisfy the following properties:
\begin{itemize}
\item Due to the continuity of $\sigma$, no point can be right-scattered and left-dense.
\item Due to the $\nabla$-differentiability of $\sigma$, the density in a dense point cannot be \textit{too weak}, in contrary to $1.$ in Example~\ref{ex4}. Secondly, in a left- and right-dense point, the left and the right densities have to be \textit{homogeneous} with limit equal to $1$, as in $8.$, $10.$ of Example~\ref{ex3} and in contrary to $2.$, $3.$ of Example~\ref{ex4}.
\end{itemize}

\subsection{Some results and proof of Theorem~\ref{thmmain}}\label{section23}

The most important result of this section is the following one.

\begin{proposition}\label{thm1}
Let $u : \T \longrightarrow \R^n$ and let $t \in \TKk$. If the two following properties are satisfied:
\begin{itemize}
\item $\sigma$ is $\nabla$-differentiable at $t$;
\item $u$ is $\DD$-differentiable at $t$;
\end{itemize}
then, $u^\sigma$ is $\nabla$-differentiable at $t$ with $(u^\sigma)^\nabla (t) = \sigma^\nabla (t) u^\DD (t)$.
\end{proposition}

\begin{proof}
Since $\sigma$ is continuous at $t$, recall that $\sigma \circ \rho (t) = t$ from Lemma~\ref{propcont}. We distinguish two cases: $t \in \LS$ and $\rho(t)=t$.
\begin{itemize}
\item Firstly, let us consider that $t \in \LS$. Since $\sigma$ is continuous at $t$, we have $\sigma^\nabla (t) = \mu (t) / \nu (t)$, see Proposition~\ref{propcont2}. If moreover $t \in \RS$, then $u^\DD (t) = (u(\sigma(t)) - u(t))/\mu (t)$ and since $t$ is isolated, $u^\sigma$ is $\nabla$-differentiable at $t$ with
\begin{equation}
(u^\sigma)^\nabla (t) = \dfrac{u^\sigma(t) - u^\sigma (\rho(t))}{\nu (t)} = \dfrac{u(\sigma(t)) - u(t)}{\nu (t)} = \dfrac{\mu (t)}{\nu (t)} u^\DD (t) = \sigma^\nabla (t) u^\DD (t).
\end{equation}
In the contrary case $\sigma(t) = t$, since $\sigma$ is continuous at $t$ and since $u$ is continuous at $t = \sigma (t)$, we deduce that $u^\sigma$ is continuous at $t \in \LS$. Then, from the nabla version of Proposition~\ref{proprappeldelta}, $u^\sigma$ is $\nabla$-differentiable at $t$ with
\begin{equation}
(u^\sigma)^\nabla (t) = \dfrac{u^\sigma(t) - u^\sigma (\rho(t))}{\nu (t)} = \dfrac{u(\sigma(t)) - u(t)}{\nu (t)} = 0,
\end{equation}
since $\sigma (t) = t$. However, in this case, we have $\sigma^\nabla (t) = \mu (t) / \nu (t) = 0$. Consequently, we also retrieve $(u^\sigma)^\nabla (t) = \sigma^\nabla (t) u^\DD (t)$ in this case.
\item Secondly, let us consider that $\rho(t)=t$. Necessarily, $t \in \RD$. Indeed, if $t=a \in \Tk$, then $a \in \RD$. If $t \neq a$, then $t \in \LD$ and since $\sigma$ is continuous at $t$ and $t \in \Tk$, we deduce that $t \in \RD$ from Lemma~\ref{propcont}. Finally, since $u$ is $\DD$-differentiable at $t$ and since $\sigma$ is $\nabla$-differentiable at $t$, we have
\begin{equation}
(u^\sigma)^\nabla (t) = \lim\limits_{\substack{s \to t \\ s \neq t}} \dfrac{u^\sigma(s) - u^\sigma (\rho(t))}{s-\rho(t)} =  \lim\limits_{\substack{s \to t \\ s \neq t}} \dfrac{\sigma(s) - t}{s-t} \; \dfrac{u(\sigma(s)) - u (t)}{\sigma(s)-t}  = \sigma^\nabla (t) u^\DD (t).
\end{equation}
In the previous limit, since $\sigma$ is continuous at $t \in \LD \cap \RD$, we have used that $s \to t$, $s \neq t$ implies that $\sigma (s) \to \sigma (t) = t$, $\sigma(s) \neq t$.
\end{itemize}
The proof is complete.
\end{proof}

From Proposition~\ref{thm1}, the following corollary is directly derived.

\begin{corollary}\label{cor1}
Let $u : \T \longrightarrow \R^n$. If the following properties are satisfied:
\begin{itemize}
\item $\sigma$ is $\nabla$-differentiable on $\Tk$;
\item $u$ is $\DD$-differentiable on $\TK$;
\end{itemize}
then, $u^\sigma$ is $\nabla$-differentiable at every $t \in \TKk$ with $(u^\sigma)^\nabla (t) = \sigma^\nabla (t) u^\DD (t)$.
\end{corollary}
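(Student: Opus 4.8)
The plan is to obtain this global statement as an immediate pointwise application of Theorem~\ref{thm1}, since Theorem~\ref{thm1} already contains all the analytic content in its local form. First I would fix an arbitrary point $t \in \TKk$ and recall that, by definition, $\TKk = \TK \cap \Tk$, so that $t \in \TK$ and $t \in \Tk$ hold simultaneously. The first hypothesis of the corollary asserts the $\nabla$-differentiability of $\sigma$ on the whole of $\Tk$; since $t \in \Tk$, this specializes to $\sigma$ being $\nabla$-differentiable at the point $t$. Likewise, the second hypothesis asserts the $\DD$-differentiability of $u$ on the whole of $\TK$; since $t \in \TK$, this specializes to $u$ being $\DD$-differentiable at $t$.

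Having verified at the fixed point $t$ that the two hypotheses of Theorem~\ref{thm1} are met, I would invoke that theorem directly to conclude that $u^\sigma$ is $\nabla$-differentiable at $t$ and that $(u^\sigma)^\nabla (t) = \sigma^\nabla (t) u^\DD (t)$. Since $t$ was an arbitrary element of $\TKk$, this establishes both the $\nabla$-differentiability of $u^\sigma$ and the product formula at every point of $\TKk$, which is precisely the assertion of the corollary.

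There is essentially no obstacle at this stage, and this is the point worth emphasizing: the genuine difficulties, namely the case distinction between $t \in \LS$ and $t \in \LD$, the use of the continuity of $\sigma$ together with the identity $\sigma \circ \rho (t) = t$ from Proposition~\ref{propcont}, and the factorization of the $\nabla$-difference quotient of $u^\sigma$, have all already been dispatched in the proof of Theorem~\ref{thm1}. The corollary is purely the globalization over all $t$ of that pointwise statement; the only verification required is the elementary set-theoretic observation $\TKk \subset \TK$ and $\TKk \subset \Tk$, which is what allows the two global differentiability hypotheses to transfer to the individual point $t$.
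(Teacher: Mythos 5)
Your proof is correct and matches the paper's own argument: the paper derives Corollary~\ref{cor1} directly from Theorem~\ref{thm1} by exactly this pointwise specialization, noting that any $t \in \TKk$ lies in both $\TK$ and $\Tk$ so the global hypotheses apply at $t$. Nothing further is required.
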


From Propositions~\ref{proprappeldelta2}, \ref{propintel} and Corollary~\ref{cor1}, our main result (Theorem~\ref{thmmain}) is proved. We conclude this section by introducing the following Leibniz formula useful in order to derive a Noether-type theorem (Theorem~\ref{thmnoether}) in Section~\ref{section3}.

\begin{proposition}[Leibniz formula]\label{propleibniz}
Let $u, v : \T \longrightarrow \R^n$ and $t \in \TKk$. If the following properties are satisfied:
\begin{itemize}
\item $\sigma$ is $\nabla$-differentiable at $t$;
\item $u$ is $\DD$-differentiable at $t$;
\item $v$ is $\nabla$-differentiable at $t$;
\end{itemize}
then, $u^\sigma \cdot v$ is $\nabla$-differentiable at $t$ and the following Leibniz formula holds:
\begin{equation}
(u^\sigma \cdot v )^\nabla (t) = u(t) \cdot v^\nabla (t) + \sigma^\nabla (t) u^\DD (t) \cdot v(t).
\end{equation}
\end{proposition}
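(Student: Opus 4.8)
The plan is to reduce everything to the $\nabla$-differentiability of $u^\sigma$ already established in Theorem~\ref{thm1}, and then to run the classical telescoping argument for a product, being careful with the backward shift that the $\nabla$-derivative introduces. The whole point is that the $\nabla$-differentiability of $\sigma$ both makes $u^\sigma$ $\nabla$-differentiable \emph{and} forces the troublesome shift term to collapse.

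First I would record the consequences of the hypotheses at the point $t$. Since $\sigma$ is $\nabla$-differentiable at $t$ and $u$ is $\DD$-differentiable at $t$, Theorem~\ref{thm1} guarantees that $u^\sigma$ is $\nabla$-differentiable at $t$ with $(u^\sigma)^\nabla (t) = \sigma^\nabla (t) u^\DD (t)$. Moreover, $\nabla$-differentiability of $\sigma$ at $t$ forces $\sigma$ to be continuous at $t$ (by the $\nabla$-analogue of Proposition~\ref{proprappeldelta}), so Proposition~\ref{propcont} yields $\sigma \circ \rho (t) = t$. In particular, the backward shift of $u^\sigma$ satisfies $u^\sigma (\rho(t)) = u(\sigma(\rho(t))) = u(t)$, which is the identity that will make the shift term reduce to $u(t)$. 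Note also that $u^\sigma$ and $v$, being $\nabla$-differentiable at $t$, are continuous at $t$.

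Next I would write the difference quotient defining the $\nabla$-derivative of the scalar product $u^\sigma \cdot v$ at $t$ and split it by the standard telescoping identity. Setting $f = u^\sigma$ and $g = v$, I use, for $s \neq \rho(t)$,
\begin{equation}
\frac{f(s)\cdot g(s) - f(\rho(t))\cdot g(\rho(t))}{s-\rho(t)} = f(\rho(t)) \cdot \frac{g(s)-g(\rho(t))}{s-\rho(t)} + \frac{f(s)-f(\rho(t))}{s-\rho(t)} \cdot g(s).
\end{equation}
This particular decomposition is chosen precisely so that the first factor of the first term is the shift $f(\rho(t)) = u(t)$ computed above, while the second term carries the $\nabla$-derivative of $f = u^\sigma$. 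Passing to the limit $s \to t$, $s \neq \rho(t)$, the quotient $(g(s)-g(\rho(t)))/(s-\rho(t))$ tends to $v^\nabla(t)$ and $(f(s)-f(\rho(t)))/(s-\rho(t))$ tends to $(u^\sigma)^\nabla(t) = \sigma^\nabla(t) u^\DD(t)$; the factor $g(s) = v(s)$ tends to $v(t)$ by continuity of $v$ at $t$, whereas $f(\rho(t)) = u(t)$ is constant in $s$. Combining these, the limit exists and equals $u(t) \cdot v^\nabla (t) + \sigma^\nabla (t) u^\DD (t) \cdot v(t)$, which is exactly the announced formula and simultaneously establishes the $\nabla$-differentiability of $u^\sigma \cdot v$ at $t$.

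The main obstacle is the correct treatment of the shift term $f(\rho(t))$: the $\nabla$-product rule on an arbitrary time scale would leave $u^\sigma(\rho(t)) = u(\sigma(\rho(t)))$, and it is precisely the $\nabla$-differentiability, hence continuity, of $\sigma$ that collapses this to $u(t)$ via Proposition~\ref{propcont}, thereby removing the unwanted shift. A minor point to verify is that the same decomposition covers the left-scattered case $t \in \LS$, where the limit reduces to the single difference quotient at $s=t$; there the displayed identity still holds verbatim and delivers the formula without any separate argument.
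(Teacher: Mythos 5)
Your proof is correct and follows essentially the same route as the paper: both rest on Theorem~\ref{thm1} to get $(u^\sigma)^\nabla(t)=\sigma^\nabla(t)\,u^\DD(t)$ and on the collapse $u^\sigma(\rho(t))=u(\sigma(\rho(t)))=u(t)$ supplied by Proposition~\ref{propcont}, followed by the $\nabla$-product rule. The only difference is that where the paper simply cites the time-scale Leibniz formula (\cite[Theorem~1.20 p.8]{bohn}), you re-derive it inline via the standard telescoping decomposition, which makes the argument self-contained but is not a different method.
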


\begin{proof}
Since $\sigma$ is continuous at $t \in \Tk$, we have $\sigma \circ \rho (t) = t$ from Lemma~\ref{propcont}. From Proposition~\ref{thm1}, $u^\sigma$ is $\nabla$-differentiable at $t$ with $(u^\sigma)^\nabla (t) = \sigma^\nabla (t) u^\DD (t)$. Finally, from the usual Leibniz formula on time scale (see \cite[Theorem~1.20 p.8]{bohn}), we have $u^\sigma \cdot v$ is $\nabla$-differentiable at $t$ with
\begin{equation}
(u^\sigma \cdot v )^\nabla (t) = u^\sigma(\rho(t)) \cdot v^\nabla (t) + (u^\sigma )^\nabla (t) \cdot v(t) = u(t) \cdot v^\nabla (t) + \sigma^\nabla (t) u^\DD (t) \cdot v(t).
\end{equation} 
The proof is complete.
\end{proof}

\section{Application to a Noether-type theorem}\label{section3}

We first review the definition of a one-parameter family of infinitesimal transformations of $\R^n$.

\begin{definition}
Let $\eta > 0$. A map $\Phi$ is said to be a one-parameter family of infinitesimal transformations of $\R^n$ if $\Phi$ is a map of class $\CC^2$
\begin{equation}
\fonction{\Phi}{[-\eta,\eta] \times \R^n}{\R^n}{(\theta,x)}{\Phi (\theta,x),}
\end{equation}
such that $\Phi (0,\cdot) = \mathrm{Id}_{\R^n}$.
\end{definition}

The action of a one-parameter family of infinitesimal transformations of $\R^n$ on a Lagrangian allows us to introduce the notion of symmetry for a $\nabla \circ \DD$-differential Euler-Lagrange equation~\eqref{eqdiffel}.

\begin{definition}
Let $\Phi$ be a one-parameter family of infinitesimal transformations of $\R^n$. A Lagrangian $L$ is said to be invariant under the action of $\Phi$ if for every solution $u$ of \eqref{eqdiffel} and every $t \in \TKk$, the map
\begin{equation}\label{eqinvar}
\theta \longmapsto L(\Phi (\theta,u(t)),\Phi (\theta,u)^\DD (t),t)
\end{equation}
has a null derivative at $\theta = 0$. In such a case, $\Phi$ is said to be a symmetry of the $\nabla \circ \DD$-differential Euler-Lagrange equation \eqref{eqdiffel} associated.
\end{definition}

The most classical examples of invariance of a Lagrangian under the action of a one-parameter family of infinitesimal transformations of $\R^n$ are given by quadratic Lagrangians and rotations:

\begin{example}\label{ex5}
Let us consider $n=2$, $L(x,v,t) = \Vert x \Vert^2 + \Vert v \Vert^2$, $\eta = \pi > 0$ and $\Phi$ defined by
\begin{equation}
\fonction{\Phi}{[-\pi,\pi] \times \R^2}{\R^2}{(\theta,x_1,x_2)}{\left( \begin{array}{cc} \cos (\theta) & -\sin(\theta) \\ \sin(\theta) & \cos (\theta) \end{array} \right) \times \left( \begin{array}{c} x_1 \\ x_2 \end{array} \right).}
\end{equation}
Then, for every $u \in \CCC^{1,\DD}_\mathrm{rd} (\T)$, every  $(\theta,t) \in [-\pi,\pi] \times \TKk$, we have $\Phi (\theta,u)^\DD (t) = \Phi (\theta,u^\DD(t))$ from linearity and continuity of $\Phi$ in its two last variables. Consequently, for every $u \in \CCC^{1,\DD}_\mathrm{rd} (\T)$ and every $\TKk$, one can easily prove that the map
\begin{equation}
\theta \longmapsto L(\Phi (\theta,u(t)),\Phi (\theta,u)^\DD (t),t)
\end{equation}
is independent of $\theta$ and consequently is constant and has a null derivative at $\theta = 0$.
\end{example}

Finally, on time scales with $\nabla$-differentiable $\sigma$, we prove the following Noether-type theorem providing a constant of motion for $\nabla \circ \DD$-differential Euler-Lagrange equations \eqref{eqdiffel} admitting a symmetry.

\begin{theorem}[Noether]\label{thmnoether}
Let us assume that $\sigma$ is $\nabla$-differentiable on $\Tk$ and let $\Phi$ be a one-parameter family of infinitesimal transformations of $\R^n$. If $L $ is invariant under the action of $\Phi$, then for every solution $u$ of \eqref{eqdiffel}, there exists $c \in \R$ such that
\begin{equation}
\dfrac{\partial L}{\partial v} (u(t),u^\DD (t),t) \cdot \dfrac{\partial \Phi}{\partial \theta} (0,u^\sigma (t))  =c,
\end{equation}
for every $t \in \TK$.
\end{theorem}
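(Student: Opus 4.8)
The plan is to show that the candidate constant of motion has vanishing $\nabla$-derivative on $\TKk$ and then to invoke the $\nabla$-analog of Proposition~\ref{proprappeldelta2} to conclude. Throughout I would write $p = \frac{\partial L}{\partial v}(u,u^\DD,\cdot)$ and $q = \frac{\partial \Phi}{\partial \theta}(0,u(\cdot))$, so that the quantity to be shown constant is exactly $p \cdot q^\sigma$, since $\frac{\partial \Phi}{\partial \theta}(0,u^\sigma(t)) = q^\sigma(t)$. As $u$ is a solution of \eqref{eqdiffel}, the function $p$ is $\nabla$-differentiable on $\TKk$ with $p^\nabla(t) = \sigma^\nabla(t)\frac{\partial L}{\partial x}(u(t),u^\DD(t),t)$. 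Because $\Phi$ is of class $\CC^2$, the map $g := \frac{\partial \Phi}{\partial \theta}(0,\cdot)$ is of class $\CC^1$, so $q = g\circ u$ is $\DD$-differentiable on $\TK$ (by the time-scale chain rule, or equivalently by the case analysis below).

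First I would record the infinitesimal form of the invariance hypothesis. Fixing a solution $u$ and $t \in \TKk$, the map $\theta \mapsto L(\Phi(\theta,u(t)),\Phi(\theta,u)^\DD(t),t)$ is constant, so differentiating at $\theta = 0$ and using $\Phi(0,\cdot) = \mathrm{Id}_{\R^n}$ yields
\[
\frac{\partial L}{\partial x}(u(t),u^\DD(t),t)\cdot q(t) + \frac{\partial L}{\partial v}(u(t),u^\DD(t),t)\cdot \left.\frac{\partial}{\partial \theta}\right|_{\theta=0}\Phi(\theta,u)^\DD(t) = 0.
\]
The crucial point is the commutation identity $\left.\frac{\partial}{\partial\theta}\right|_{\theta=0}\Phi(\theta,u)^\DD(t) = q^\DD(t)$, after which the infinitesimal invariance reads $\frac{\partial L}{\partial x}(u(t),u^\DD(t),t)\cdot q(t) + p(t)\cdot q^\DD(t) = 0$.

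Granting this, I would apply the Leibniz formula of Proposition~\ref{propleibniz} to $q^\sigma \cdot p$ (with the roles $u \leftarrow q$, $v \leftarrow p$), which is licit since $\sigma$ is $\nabla$-differentiable, $q$ is $\DD$-differentiable and $p$ is $\nabla$-differentiable at $t$. This gives $(q^\sigma\cdot p)^\nabla(t) = q(t)\cdot p^\nabla(t) + \sigma^\nabla(t) q^\DD(t)\cdot p(t)$. Substituting $p^\nabla(t) = \sigma^\nabla(t)\frac{\partial L}{\partial x}(u(t),u^\DD(t),t)$ from \eqref{eqdiffel} and factoring $\sigma^\nabla(t)$ reduces the right-hand side to $\sigma^\nabla(t)\bigl(\frac{\partial L}{\partial x}(u(t),u^\DD(t),t)\cdot q(t) + q^\DD(t)\cdot p(t)\bigr)$, whose parenthesis vanishes by the infinitesimal invariance identity. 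Hence $(p\cdot q^\sigma)^\nabla = 0$ on $\TKk$, and the $\nabla$-analog of Proposition~\ref{proprappeldelta2} (applied on the time scale $\TK$, where $\rho$ coincides with that of $\T$ at every point of $\TK$) produces a constant $c \in \R$ with $p(t)\cdot q^\sigma(t) = c$ for every $t \in \TK$, which is the claimed conservation law.

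The main obstacle is precisely the commutation identity $\left.\frac{\partial}{\partial\theta}\right|_{\theta=0}\Phi(\theta,u)^\DD(t) = q^\DD(t)$, exchanging the classical $\theta$-derivative with the time-scale $\DD$-derivative. I would establish it by distinguishing cases following Proposition~\ref{proprappeldelta}, writing $D_x$ for the Jacobian in the space variable. At a right-scattered $t$ the $\DD$-derivative is the explicit quotient $\bigl(\Phi(\theta,u(\sigma(t)))-\Phi(\theta,u(t))\bigr)/\mu(t)$, and differentiating in $\theta$ at $0$ immediately gives $\bigl(g(u(\sigma(t)))-g(u(t))\bigr)/\mu(t) = q^\DD(t)$. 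At a right-dense $t$, one first checks via the ordinary-limit characterization that $\Phi(\theta,u)^\DD(t) = D_x\Phi(\theta,u(t))\,u^\DD(t)$, and then the Clairaut identity $\left.\partial_\theta\right|_0 D_x\Phi(\theta,x) = D_x g(x)$, valid since $\Phi \in \CC^2$, yields $\left.\partial_\theta\right|_0\Phi(\theta,u)^\DD(t) = D_x g(u(t))\,u^\DD(t) = q^\DD(t)$. The interchange of the limit defining the $\DD$-derivative with the $\theta$-differentiation at right-dense points is exactly where the $\CC^2$ regularity of $\Phi$ is needed.
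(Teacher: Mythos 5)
Your proposal is correct and follows essentially the same route as the paper's own proof: differentiate the invariance map at $\theta=0$, interchange $\partial/\partial\theta$ with $\DD$, multiply by $\sigma^\nabla(t)$ and substitute the Euler--Lagrange equation~\eqref{eqdiffel}, apply the Leibniz formula of Proposition~\ref{propleibniz}, and conclude with the $\nabla$-analogue of Proposition~\ref{proprappeldelta2}. The only difference is that you spell out details the paper leaves implicit, namely the case-by-case (right-scattered versus right-dense) justification of the commutation identity $\left.\partial_\theta\right|_{0}\Phi(\theta,u)^\DD(t)=q^\DD(t)$ and the fact that the constancy argument is applied on the time scale $\TK$; these are welcome refinements, not a different method.
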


\begin{proof}
Let $u$ be a solution of \eqref{eqdiffel}. Let us differentiate the map given by~\eqref{eqinvar} at $\theta = 0$ and let us invert the operators $\DD$ and $\partial / \partial \theta$ from the $\CC^2$-regularity of $\Phi$. We obtain for every $t \in \TKk$:
\begin{equation}
\dfrac{\partial L}{\partial x} (u(t),u^\DD (t),t) \cdot \dfrac{\partial \Phi}{\partial \theta} (0,u(t)) + \dfrac{\partial L}{\partial v} (u(t),u^\DD (t),t) \cdot \dfrac{\partial \Phi}{\partial \theta} (0,u)^\DD (t) = 0.
\end{equation}
Finally, multiplying this last equality by $\sigma^\nabla (t)$ and using that $u$ is solution of \eqref{eqdiffel} on $\TKk$, we obtain
\begin{equation}
\left[ \dfrac{\partial L}{\partial v} (u,u^\DD,\cdot) \right]^\nabla (t) \cdot \dfrac{\partial \Phi}{\partial \theta} (0,u(t)) + \sigma^\nabla (t) \dfrac{\partial L}{\partial v} (u(t),u^\DD (t),t) \cdot \dfrac{\partial \Phi}{\partial \theta} (0,u)^\DD (t) = 0,
\end{equation}
for every $t \in \TKk$. Finally, from the Leibniz formula obtained in Proposition~\ref{propleibniz}, it holds
\begin{equation}
\left[ \dfrac{\partial L}{\partial v} (u,u^\DD,\cdot) \cdot \dfrac{\partial \Phi}{\partial \theta} (0,u)^\sigma \right]^\nabla (t) = 0,
\end{equation}
for every $t \in \TKk$. From the nabla version of Proposition~\ref{proprappeldelta2}, the proof is complete.
\end{proof}

Note that this theorem encompasses both the well known Noether's theorems given in the continuous case \cite[p.88]{arno} and in the (nonshifted) discrete case \cite[Theorem~6.4]{lubi}. For an example of application of Theorem~\ref{thmnoether}, one can consider the framework given in Example~\ref{ex5}.

\section{The $\nabla$-analogous results}\label{section4}
We conclude this paper with the following remark. The whole study made in this paper can be analogously derived for nonshifted calculus of variations with Lagrangian functionals involving a $\nabla$-integral dependent on a $\nabla$-derivative. We refer to the duality principle introduced in \cite{capu} and to \cite{torr15} for an example of application in calculus of variations on general time scales.

Precisely, let us assume that $\rho$ is $\DD$-differentiable on $\TK$. Then, the following $\DD \circ \nabla$-differential Euler-Lagrange equation on $\TKk$:
\begin{equation}\label{eqdiffelnabla}\tag{EL${}^{\DD \circ \nabla}_{\text{diff}}$}
\left[ \dfrac{\partial L}{\partial v} (u,u^\nabla,\cdot) \right]^\DD (t) =  \rho^\DD (t) \dfrac{\partial L}{\partial x} (u(t),u^\nabla(t),t)
\end{equation}
characterizes the critical points of the following (nonshifted) Lagrangian functional:
\begin{equation}
\fonction{\L}{\mathrm{C}^{1,\nabla}_{\mathrm{ld}}(\T)}{\R}{u}{\di \int_a^b L(u(\tau),u^\nabla(\tau),\tau) \; \nabla \tau.}
\end{equation}
In particular, a necessary condition for local optimizers of $\L$ is to be a solution of \eqref{eqdiffelnabla}. 

Finally, let us assume that $L $ is moreover invariant under the action of a one-parameter family $\Phi$ of infinitesimal transformations of $\R^n$ in the sense that for every solution $u$ of \eqref{eqdiffelnabla} and every $t \in \TKk$, the map
\begin{equation}
\theta \longmapsto L(\Phi (\theta,u(t)),\Phi (\theta,u)^\nabla (t),t)
\end{equation}
has a null derivative at $\theta = 0$. Then, for every solution $u$ of \eqref{eqdiffelnabla}, there exists $c \in \R$ such that
\begin{equation}
\dfrac{\partial L}{\partial v} (u(t),u^\nabla (t),t) \cdot \dfrac{\partial \Phi}{\partial \theta} (0,u^\rho (t))  =c,
\end{equation}
for every $t \in \Tk$.

\bibliographystyle{plain}
\bibliography{bibliototal}

\end{document}